\renewcommand{\geq}{\geqslant}
\renewcommand{\leq}{\leqslant}
\newcommand{\bs}{\boldsymbol}
\newtheorem{lemma}{Lemma}[section] 
\newtheorem{thm}[lemma]{Theorem} 
\newtheorem{cor}[lemma]{Corollary} 
\newtheorem*{thm*}{Theorem} 
\newtheorem*{prop*}{Proposition} 
\newtheorem*{cor*}{Corollary} 
\newtheorem*{conj*}{Conjecture} 
\numberwithin{equation}{section} 
\theoremstyle{remark} 
\newtheorem{rem}[lemma]{Remark} 
\newtheorem*{rem*}{Remark} 
\theoremstyle{definition} 
\newtheorem*{Def*}{Definition}
\def\ZZ{{\mathbb Z}}
\def\NN{{\mathbb N}}
\def\RR{{\mathbb R}}
\def\CC{{\mathbb C}}
\def\SS{{\mathbb S}}
\newcommand{\cB}{{\mathcal B}}
\newcommand{\cH}{\mathcal H}
\newcommand{\cL}{{\mathcal L}}
\newcommand{\cN}{{\mathcal N}}
\newcommand{\cO}{{\mathcal O}}
\newcommand{\cS}{\mathcal S}
\def\tP{\widetilde{P}}
\newcommand{\sH}{\mathsf{H}}
\def\be{\begin{eqnarray*}}
\def\ee{\end{eqnarray*}}
\def\ben{\begin{eqnarray}}
\def\een{\end{eqnarray}}
\newcommand{\Lap}{\Delta}
\newcommand{\abs}[1]{{\left\lvert{#1}\right\rvert}}
\newcommand{\norm}[1]{{\left\lVert{#1}\right\rVert}}
\newcommand{\ang}[1]{{\left\langle{#1}\right\rangle}}
\renewcommand{\d}{\partial}
\newcommand{\ep}{{\epsilon}}
\newcommand{\hamvf}{{\textsf{H}}}
\newcommand{\WF}{\operatorname{WF}} 
\newcommand{\supp}{\operatorname {supp}}
\newcommand{\e}{\operatorname{e}}
\newcommand{\WFh}{\WF_{\semi}} 
\newcommand{\semi}{\hbar} 
\renewcommand{\Im }{\operatorname{Im}}
\renewcommand{\Re}{\operatorname{Re}}
\renewcommand{\Pr }{\operatorname{Pr}}
\newcommand{\im }{\operatorname{Im}}
\newcommand{\re}{\operatorname{Re}}
\newcommand{\Id}{\operatorname{Id}} 
\renewcommand{\i}{\operatorname{i}} 
\def\phi{\varphi}
\newcommand{\CI}{C^\infty}
\begin{document}

\title{From resolvent estimates to damped waves} 
\date{\today} 
\author{Hans Christianson}
\author{Emmanuel Schenck}
\author{Andr\'as Vasy}
\author{Jared Wunsch}
\address{Department of Mathematics, University of North Carolina}
\address{D\'epartement de Math\'ematiques, Universit\'e Paris 13}
\address{Department of Mathematics, Stanford University}
\address{Department of Mathematics, Northwestern University}
\email{hans@unc.edu}
\email{schenck@math.univ-paris13.fr}
\email{andras@math.stanford.edu}
\email{jwunsch@math.northwestern.edu}

 \thanks{The authors are grateful for partial support from NSF grants
   DMS-0900524 (HC), DMS-1068742 (AV), DMS-1001463 (JW)}

\begin{abstract} 
  In this paper we show how to obtain decay estimates for the damped
  wave equation on a compact manifold without geometric control via
  knowledge of the dynamics near the un-damped set.  We show that if
  replacing the damping term with a higher-order \emph{complex
    absorbing potential} gives an operator enjoying polynomial
  resolvent bounds on the real axis, then the ``resolvent'' associated
  to our damped problem enjoys bounds of the same order.  It is known
  that the necessary estimates with complex absorbing potential can
  also be obtained via gluing from estimates for corresponding
  non-compact models.
\end{abstract}

\maketitle 

\section{Introduction}\label{S:intro}

On a compact, connected Riemannian manifold without
boundary $(X,g)$, we consider the non-selfadjoint Schr\"odinger operator
\begin{equation}\label{e:nsa op}
P(h)=h^2\Delta_g+\i h a\,
\end{equation}
where $a\in C^\infty (X)$ is a non-negative function, and 
$\Delta_g=d^*d$ is the \emph{non-negative} Laplacian associated to the
metric $g.$
This paper mainly addresses the question of the semiclassical analysis of the resolvent of $P(h)$, 
$$
R_z(h):=(P(h)-z)^{-1}
$$
for $z$ in a complex $h-$dependent neighborhood of 1.  
For non-selfadjoint
operators, it is well known that the norm of the resolvent
$\|R_z(h)\|_{\cL(L^2,L^2)}$ may be large, even far from the spectrum
\cite{EmTr05}, and a better understanding of the resolvent properties of
non-selfadjoint operators remains a challenging problem \cite{Sjo09}.  In this
paper we are particularly interested in (polynomial) upper bounds in $h$
for the resolvent. These bounds are especially useful when studying the
stabilization problem, which deals with the rate of the energy decay of the
solution of the \emph{damped wave equation} on $X$ :
\begin{equation}
\label{eq:dwe}
\left\{ \begin{array}{l}
\left( \partial_t^2 + \Delta_g + 
a(x) \partial_t \right) u(x,t) = 0, \quad (x,t) \in X \times (0, \infty) \\
u(x,0) = u_0\in H^1(X), \quad \partial_t u(x,0) = u_1\in H^0(X).
\end{array} \right.
\end{equation}
It has been shown (see \cite{Leb93}) that if $a >0$ somewhere, then the energy of the waves,
$$
E(u,t)=\frac{1}{2}\int_X | \partial_t u |^2 + | \nabla u |^2 dx 
$$
satisfies $E(u,t)\xrightarrow{t \to\infty }0$ for any initial data
$(u_0,u_1)\in H^1\times H^0$. If some monotone decreasing function $f(t)$  can be found such that 
$$E(u,t)\leq f(t)E(u,0)\,,
$$
so-called strong stabilization occurs. It is not hard to show that this is
equivalent to a uniform exponential decay : $\exists C,\beta>0$ such
that for
any $u$ solution of \eqref{eq:dwe},
$$E(u,t)\leq Ce^{-\beta t}E(u,0).$$
In pioneering works of Rauch, Taylor, Bardos and Lebeau \cite{RaTa75,
  BaLeRa92, Leb93}, it has been shown in various settings that strong
stabilization is equivalent to the geometric control condition (GCC) :
there exists $T_0>0$ such that from every point in
$\Sigma=\{|\xi|^2_g=1\}\subset T^*X$, the bicharacteristic of $P(h)$
reaches $\{a>0\}$ in time $\leq T_0$.  By contrast, when the manifold $X$ is no longer
controlled by $a$, decay rate estimates usually involve additional
regularity of the initial data. They take the form
$$E(u,t)\leq f_s(t)\|u\|^2_{\cH^s}$$
 for $s>0$ and
$$
\|u\|_{\cH^s}=\|u(0)\|^2_{H^{1+s}}+\|\partial_t u(0)\|^2_{H^s}\,.
$$
The question of exponential energy decay reduces to the study of
high-frequency phenomena, in particular the issue of the spectral
properties in the semiclassical limit $h\to 0$ of certain
non-selfadjoint operators approximately of the form\footnote{Strictly
  speaking, in order to apply resolvent bounds to the damped
  wave equation, we also need the imaginary part of the Schr\"odinger
  operator to be mildly $z$ dependent, with
\[
R_z(h) = (h^2\Delta_g + \i
ha \sqrt{z} -z)^{-1};
\]
this will be handled by perturbation (see Corollary
\ref{C:bb-1},
Section
\ref{s:dwe}, as well as references \cite{Leb93, Hit03, Chr-wave-2}).}
 \eqref{e:nsa op}, on a fixed energy layer. For
instance, when geometric control holds, there exist $h_0>0$, $C,
c>0$ such that for $z\in [1-\delta,1+\delta]+\i [-ch,ch]$ we have
\begin{equation}\label{eq:damp-glob-bd} 
\|R_z(h)\|_{\cL(L^2,L^2)}\leq C/h,\ h<h_0. 
\end{equation} 
Standard arguments then show that this resolvent estimate implies the
uniform exponential decay for the energy.  
Similar arguments will apply in the case considered here, of resolvent estimates with
loss.

\subsection{Motivation}
While our main motivation for studying resolvent estimates for $P(h)$ come
from the stabilization problem, our approach in this paper is oriented by
geometric considerations, as we explain now. As discussed above, in the
presence of geometric control, the resolvent $(P(h)-z)^{-1}$ enjoys a
polynomial bound in a neighbourhood of size $ch$ around the real axis, and this
property implies exponential damping. When geometric control no longer holds,
it is then a natural question to ask what type of estimate can be satisfied
by $\|(P(h)-z)^{-1}\|$, and, crucially, in what type of
complex neighbourhood of the real axis a resolvent estimate can be
obtained. The properties of the undamped set
\begin{equation}
\cN=\{\rho\in S^*X: \forall t\in\mathbb R, a\circ \e^{t\sH_p}(\rho)=0 \}
\end{equation}
are of central importance for this question.  Here,
$\sH_p$ denotes the Hamiltonian vector field generated by the principal symbol
$p=\sigma_h(P(h))=\lvert\xi\rvert_g^2$ of the operator $P(h),$ and
$S^*X=p^{-1}(1)$ denotes the unit cosphere bundle.  We remind the reader
that the flow generated by $\sH_p$ in $S^*X$ is simply the geodesic flow.

We now review some known results in the case $\cN\neq
\emptyset$. In \cite{Chr-NC-erratum}, the case when $\cN$ is a single hyperbolic
orbit is analyzed, and a polynomial resolvent estimate for $\|(P-z)^{-1}\|$
is shown in a $h/|\log h|$-size neighbourhood of the real axis. As a
consequence, the energy decay is sub-exponential : with the above
notation, one can get $f_s(t)=\e^{-\beta_s \sqrt t}$. It is known from
recent work \cite{BuCh-dw} that this decay is sharp. 
If the curvature of $X$ is assumed to be negative, and if the relative
size of the damping function $a$ is sufficiently large, 
then the resolvent
obeys a polynomial bound in a size $h$ neighbourhood of the real axis, and
as a result, exponential decay for regular initial data occurs \cite
{Sch10, Sch11}. Indeed, the hypotheses in \cite{Sch11} is much more general,
requiring only undamped sets of small pressure; the hyperbolic geodesic is a special
case. We note also that for \emph{constant} negative curvature, the
need for an arbitrarily 
large damping function $a$
has been recently removed by
Nonnenmacher, by using different methods \cite{Non11}.

A natural question raised by the above remarks, is the following: to what
extent does the geometry of the trapped set \emph{alone} determine a type of
decay? In other words, given a trapped geometry, what type of resolvent
estimate do we expect, and in what complex neighbourhood of the real axis?
This amounts in many cases to a potentially rather crude decay rate for the energy,
as it only depends on the structure of $\cN$ and not on the global
dynamics of geodesics passing through the damping; in certain cases,
however, our results can be
seen to be sharp.\footnote{A very natural further question
  would then be : given a trapped geometry, what kind of global assumptions
  on the manifold can improve the crude decay rate obtained when only $\cN$
  is known?}  

Motivated by the ``black box''
approach of Burq-Zworski \cite{BuZw04} (cf.\ earlier work of
Sj\"ostrand-Zworski \cite{SjZw91}) as well as recent work on the
gluing of resolvent estimates by Datchev-Vasy
\cite{Datchev-Vasy:Gluing}, we give a recipe for taking
information from \emph{resolvent} estimates obtained for a noncompact
problem in which the set $\cN$ consists of all trapped
geodesics---those not escaping to infinity---and investigate what
these estimates imply for the compact problem with damping.  In
practice, as recent results of Datchev-Vasy \cite{Datchev-Vasy:Gluing}
have shown the resolvent estimates on manifolds with, say,
asymptotically Euclidean ends to be equivalent to estimates on a
compact manifold with a \emph{complex absorbing potential}
substituting for the noncompact ends, we choose for the sake of
brevity and elegance to take this latter model as our ``noncompact''
setting.  As will be discussed below, these complex absorbing
potentials have the effect of annihilating semiclassical wavefront set
along geodesics passing through them; this is why they are roughly
interchangeable with noncompact ends, along which energy can flow off to
infinity never to return.

We thus formulate our main question as follows.  Assuming that
$(X,g)$ and $a$ are given, we consider a model operator of the form
$$
P_1(h)=h^2\Delta_g+\i  W_1
$$
in which the damping is replaced by a \emph{complex absorbing potential.}
We assume that this model operator enjoys a given ``resolvent''
estimate\footnote{We refer to this as a resolvent estimate owing to its
  close relationship with the estimate for the resolvent in scattering
  problems.}  on the real axis:
\begin{equation}\label{intro:resolvent}
\| (P_1(h)-z)^{-1}  \| \leq C \frac{\alpha(h)}{h},\quad z \in [1-\delta,1+\delta].
\end{equation}
Various examples of such
estimates already appear in the literature, see for instance
\cite{VaZw, CPV, Chr-disp-1, Chr-NC-erratum, Chr-surf-res, NoZw09_1, WuZw, ChWu-lsm} and the references
therein. 
Given \eqref{intro:resolvent}
we then aim to obtain analogous estimates for the inverse of the
operator with damping, i.e., on
$$
(P(h)-z)^{-1}=(h^2\Delta_g+\i h 
a-z)^{-1}\,,
$$ when the complex absorbing potential has crucially been
replaced by an $O(h)$ damping term. In this paper, we address this
question using a control theory argument motivated by \cite{BuZw04},
together with a recently improved estimate on resolvents truncated away from the
trapped set on one side 
\cite{DaVa-trapping-2}, and show that we
obtain the same order of estimate as for the model operator.  In the
next subsection we state the precise results.

\subsection{Results}

As above, we take $\hamvf_p$ to be the Hamilton vector field
of
$p=\abs{\xi}_g^2-1$ and $\e^{t \hamvf_p}$ its bicharacteristic flow
inside $p^{-1}(1)=S^*X$ (i.e., geodesic flow).  We continue to take
\begin{equation}
\mathcal{N}=\{\rho\in S^*X: \forall t\in\mathbb R, a\circ \e^{t\sH_p}(\rho)=0 \},
\end{equation}
and will add the further assumption that
$$
\overline{\pi(\cN)}\cap \supp a=\emptyset,
$$ 
where $\pi$ is projection $T^*X \to X.$
Thus there exists a non-empty open set $O_1$ such
that $\supp(a)\subset X\setminus O_1$, and  $\pi(\cN) \Subset
O_1$.   
The following Theorem is our main ``black box'' spectral estimate.  
\begin{thm}
\label{T:bbox1}
Assume  that for some $\delta \in (0,1)$
fixed and $K \in \ZZ$, there is a function $1 \leq \alpha(h) = \cO( h^{-K})$ such that
\[
\| (h^2\Delta_g +\i a-z)^{-1} \|_{L^2 \to L^2} \leq \frac{\alpha(h)}{h},
\]
for $z \in [1-\delta, 1+
\delta]$.  Then there exists $C, c_0>0$ such that 
\[
\| (h^2\Delta_g+\i ha -z )^{-1} \|_{L^2 \to L^2} \leq C \frac{\alpha(h)}{h},
\]
for $z \in [1-\delta,1+\delta] + \i [-c_0 , c_0 ] h/\alpha(h)$.
\end{thm}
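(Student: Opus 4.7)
My plan is to follow the Burq--Zworski-style control-theory strategy alluded to in the introduction, combining the hypothesized CAP resolvent estimate with the one-sided Datchev--Vasy improvement and with the damping energy identity. First I would extend the CAP resolvent bound from the real axis to the full strip $[1-\delta, 1+\delta] + \i[-c_0, c_0]h/\alpha(h)$: for $\Im z \leq 0$ this is immediate from the dissipativity of $P_1(h) = h^2\Delta_g + \i a$ (whose imaginary part $a$ is nonnegative), while for $\Im z \geq 0$ of size $\leq c_0 h/\alpha(h)$ it follows by a Neumann series perturbation from the real axis, valid for $c_0$ sufficiently small.

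Next, using the algebraic identity $P(h) - P_1(h) = \i(h-1)a$, I would rewrite $(P(h) - z) u = f$ as $(P_1(h) - z) u = f + \i(1-h) a u$, so that inverting with the extended CAP estimate yields
\[
\|u\|_{L^2} \leq \frac{C\alpha(h)}{h} \|f\|_{L^2} + \bigl\| (P_1(h) - z)^{-1} (au) \bigr\|_{L^2}.
\]
The heart of the argument is to bound the second term. Since $\supp a$ is spatially separated from $\overline{\pi(\cN)}$, the semiclassical wavefront set of $au$ at the characteristic shell $\{|\xi|_g^2 = 1\}$ is disjoint from $\cN$, so the one-sided resolvent improvement of \cite{DaVa-trapping-2} applied to $P_1$ should bound $\bigl\|(P_1(h) - z)^{-1}\sqrt{a}\bigr\|_{L^2 \to L^2}$ by $C\sqrt{\alpha(h)}/h$, gaining a factor of $\sqrt{h/\alpha(h)}$ over the full CAP bound. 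Writing $au = \sqrt{a}\cdot\sqrt{a}\, u$ then reduces the task to controlling $\|\sqrt{a}\, u\|_{L^2}$, which I would obtain from the imaginary part of $\langle (P(h) - z)u, u\rangle = \langle f, u\rangle$:
\[
h \|\sqrt{a}\, u\|^2 = \Im\langle f, u\rangle + \Im z\cdot\|u\|^2 \leq \|f\|\|u\| + |\Im z|\|u\|^2,
\]
so that with $|\Im z| \leq c_0 h/\alpha(h)$ one has $\|\sqrt{a}\, u\|^2 \leq \|f\|\|u\|/h + (c_0/\alpha(h))\|u\|^2$.

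Finally I would substitute these bounds back and close via Young's inequality, using the smallness of $c_0$ to absorb into the left-hand side the $\|u\|$ terms produced by the $|\Im z|$ contribution. The step I expect to be the main obstacle is precisely this closing: the factor $\sqrt{\alpha(h)}/h$ from the one-sided improvement, the factor $\sqrt{c_0/\alpha(h)}\,\|u\|$ from the damping bound, and the $h/\alpha(h)$ width of the strip must conspire so that the coefficient of $\|u\|$ on the right-hand side becomes a small absolute constant rather than something diverging with $h$. This delicate balance---which fails under a naive microlocal decomposition and crucially relies on the one-sided (rather than two-sided) nature of the Datchev--Vasy estimate---is what makes it non-trivial that the damped operator inherits the \emph{same} order of resolvent bound as the CAP operator, on a strip of the same size.
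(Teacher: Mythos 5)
Your overall strategy --- combining the CAP resolvent bound, the one-sided Datchev--Vasy improvement, and the damping identity $h\|\sqrt{a}\,u\|^2=\Im\langle f,u\rangle+\Im z\,\|u\|^2$, then closing by absorption --- draws on the right ingredients, and the preliminary extension of the CAP bound into the strip by Neumann series is fine (though note that for small \emph{negative} $\Im z$ dissipativity alone only gives $1/|\Im z|$, so you need the Neumann series there too). But the reduction you build on, namely writing $(P_1-z)u=f+\i(1-h)au$ and feeding $au$ back through $(P_1-z)^{-1}$, does not close, and the obstacle you flag at the end is fatal rather than merely delicate. The perturbation $\i(1-h)a$ is $O(1)$, not $O(h)$. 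Quantitatively: the damping identity gives $\|\sqrt{a}\,u\|\leq (h^{-1}\|f\|\,\|u\|)^{1/2}+(c_0/\alpha)^{1/2}\|u\|$, and multiplying by the one-sided bound $C\sqrt{\alpha(h)}/h$ produces the terms $C\alpha^{1/2}h^{-3/2}\|f\|^{1/2}\|u\|^{1/2}$ and $C\sqrt{c_0}\,h^{-1}\|u\|$. The second has coefficient $\sqrt{c_0}/h\to\infty$ and cannot be absorbed into the left side for any fixed $c_0>0$; and even on the real axis, where that term is absent, solving the remaining inequality self-consistently yields only $\|u\|\lesssim \alpha(h)h^{-3}\|f\|$, a loss of $h^{-2}$ over the claimed bound. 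No choice of constants repairs this: the scheme is short by exactly one power of $h$ in the factor multiplying the one-sided resolvent estimate.

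The paper's proof supplies that missing power of $h$ by never applying $(P_1-z)^{-1}$ to $au$ at all. One takes a cutoff $B_1$ equal to $1$ near $\pi(\cN)$ and supported in $O_1$, hence disjoint from $\supp a$; then $(P_1-z)B_1u=(P-z)B_1u$ exactly, and the only error fed into $(P_1-z)^{-1}$ is the commutator $[h^2\Delta_g,B_1]u=\varphi[P,B_1]\varphi u$, which carries an explicit factor of $h$ that cancels the $h^{-1}$ in the one-sided bound, leaving $C\sqrt{\alpha(h)}\,\|\varphi u\|$. The complementary pieces $\|(1-B_1)u\|$ and $\|\varphi u\|$ live in the geometrically controlled region and are estimated by the Burq--Zworski propagation lemma as $Ch^{-1}\|(P-z)u\|+C\|\chi u\|+\cO(h^\infty)\|u\|$, with $\|\chi u\|^2\leq Ch^{-1}\|(P-z)u\|\,\|u\|$ coming from the damping identity; the resulting cross term $\alpha(h) h^{-1}\|(P-z)u\|\,\|u\|$ is then absorbed by Young's inequality at the cost of $\alpha^2(h)h^{-2}\|(P-z)u\|^2$, and the complex strip is recovered at the very end by perturbation. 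If you wish to keep a global, non-microlocal decomposition, you must find a substitute for that commutator factor of $h$; as written your argument proves a strictly weaker estimate than the theorem asserts.
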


When $\cN=\emptyset$, one has $\alpha(h)=\cO(1)$, while for  $\cN\neq
\emptyset $, one has $\alpha(h)\to\infty$ as $h\to 0$.  As a general heuristic, the ``larger'' the trapped set is, the
larger is $\alpha(h)$ when $h\to 0$, and the weaker the above global
estimate is---see section \ref{s:examples} below for examples.

\begin{rem} 
As discussed above, instead of the assumption on the model operator $h^2\Delta_g+\i W_1$ with complex 
absorption, we could just as well, by the results of
\cite{Datchev-Vasy:Gluing}, have made an assumption on a model operator 
in which the set $O_1$ is ``glued'' to non-compact ends of various forms. 
In particular, it would suffice to know the cut-off resolvent estimate on the real 
axis for the limiting resolvent
$$ 
\norm{\chi (h^2\Delta'-z+i0)^{-1}\chi}_{\cL(L^2,L^2)}\leq \frac{\alpha(h)}{h} 
$$ 
for a localizer $\chi$ equal to $1$ on $O_1$ and for $\Delta'$ 
the Laplacian on a manifold with Euclidean ends whose trapped set is contained 
in a set $O_1'$ isometric to $O_1.$ 
\end{rem} 
 
\begin{rem}
  The hypotheses of 
Theorem \ref{T:bbox1} 
  can be weakened to \emph{phase
    space} hypotheses, with $a$ a pseudodifferential operator (as in
  \cite{Sjo00}). 
  We have chosen to keep the damping as
  a function on the base in accordance with tradition and for the sake
  of brevity.

\end{rem}

\begin{rem}
The assumption that $\alpha(h) = \cO(h^{-K})$ is of a technical
nature.  It does not appear to be too restrictive, however, since
every {\it known} estimate for weakly unstable trapping satisfies this
assumption (see Section \ref{s:examples} below).  
Indeed, if the undamped set $\cN$ is at least weakly
semi-unstable, the results of \cite{Chr-surf-res} suggest that in fact
$\alpha(h)$ is always $\cO_\epsilon (h^{-1-\epsilon})$ for any $\epsilon>0$.

\end{rem}

\begin{rem}\label{R:non polynomial}

If $\alpha(h)$ is  not of polynomial nature, the proof of Theorem  \ref{T:bbox1} has to be slightly modified (see below). As a result, the final estimate we can obtain is weaker : there exists $C, c_0>0$ such that 
\[
\| (h^2\Delta_g+\i ha -z )^{-1} \|_{L^2 \to L^2} \leq C \frac{\alpha^2(h)}{h},
\]
for $z \in [1-\delta,1+\delta] + \i [-c_0 , c_0 ] h/\alpha^2(h)$. 
\end{rem}

In section~\ref{s:examples} we describe three different settings in
which our gluing results apply, in which the dynamics in a
neighborhood of the trapped
set are respectively
\begin{enumerate}
\item Normally hyperbolic
\item Degenerate hyperbolic
\item Hyperbolic with a condition on topological pressure.
\end{enumerate}
In addition to proving resolvent estimates in these settings, we discuss
applications to decay rates for solutions to the damped wave equation.

\section{Operators with complex absorbing potentials}

In this section, we collect some standard results about operators with
complex absorbing potentials.  Such a potential has a much stronger 
effect than damping, namely (in the microlocal absence of forcing),
that of annihilating semiclassical wavefront set completely along 
bicharacteristics passing through it in the forward direction.

We collect basic results about the resolvent of an operator with
complex absorbing potential.  This includes both existence of the
family and the basic propagation estimates, which tell us that the
complex absorbing potential kills off wavefront set under forward
propagation.  We begin with the definition of the
``resolvent:''
\begin{lemma}\label{lemma:resolvent}
Let $W\in C^\infty (X)$, $W\geq 0$ and $W$ not identically zero. Suppose 
that $$P_1=h^2\Delta_g+\i W$$ on $X$. Then $(P_1-z)^{-1}$ is a meromorphic family of bounded operators on $L^2$ for
all $z \in \CC,$ analytic in the closed lower half-plane.
\end{lemma}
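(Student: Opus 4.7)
The plan is to combine a standard Fredholm framework for elliptic operators on a compact manifold with a simple energy identity exploiting $W \geq 0$, and then invoke the analytic Fredholm theorem to obtain meromorphy.

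First I would show that $P_1 - z : H^2(X) \to L^2(X)$ is an analytic family of Fredholm operators of index zero on all of $\CC$. Since $h^2\Delta_g + 1 : H^2(X) \to L^2(X)$ is an isomorphism (because $\Delta_g \geq 0$), and the difference
\[
(P_1 - z) - (h^2\Delta_g + 1) \;=\; \i W - (z+1)
\]
is bounded on $L^2$ and hence compact as a map $H^2 \to L^2$ through the Rellich embedding, the claim follows from the stability of the Fredholm index under compact perturbation.

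Next I would establish injectivity on the closed lower half-plane $\{\im z \leq 0\}$ by a pairing argument. If $u \in H^2(X)$ satisfies $(P_1 - z)u = 0$, pair with $u$ in $L^2$ and integrate by parts to get
\[
h^2 \|du\|_{L^2}^2 \;+\; \i \int_X W \abs{u}^2 \, dg \;=\; z \|u\|_{L^2}^2.
\]
Taking imaginary parts, since $h^2 \|du\|^2 \in \RR$ and $W \geq 0$, yields
\[
\int_X W \abs{u}^2 \, dg \;=\; (\im z)\,\|u\|_{L^2}^2.
\]
If $\im z < 0$, the left side is nonnegative while the right side is nonpositive, forcing $u \equiv 0$. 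If $\im z = 0$, then $Wu = 0$ almost everywhere, so $u$ vanishes identically on the nonempty open set $\{W > 0\}$; the equation then reduces to $(h^2 \Delta_g - \re z) u = 0$ on the connected manifold $X$, and Aronszajn's unique continuation theorem for second-order elliptic operators with smooth coefficients forces $u \equiv 0$ globally.

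By index-zero Fredholmness, injectivity implies invertibility with a bounded $L^2$-inverse, so $\{\im z \leq 0\}$ lies in the resolvent set of $P_1$. The analytic Fredholm theorem then extends $(P_1 - z)^{-1}$ to a meromorphic family of bounded operators on $L^2$ for all $z \in \CC$, with all poles located in the open upper half-plane. The only real subtlety is the injectivity on the real axis, where one must invoke unique continuation; the remaining steps are routine Fredholm machinery.
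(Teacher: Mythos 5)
Your proposal is correct and follows essentially the same route as the paper's proof: invertibility of $h^2\Delta_g+1$ plus the analytic Fredholm theorem for meromorphy, then the energy identity $\int_X W\abs{u}^2\,dg=(\im z)\norm{u}^2$ to exclude nullspace for $\im z<0$, with unique continuation handling the case $\im z=0$. You have merely spelled out the Fredholm bookkeeping ($H^2\to L^2$, Rellich compactness, index zero) that the paper leaves implicit.
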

\begin{proof}
We simply remark that $h^2
\Delta+1$ is invertible, and apply the analytic Fredholm theorem
to conclude that there exists a meromorphic resolvent family.  By the
Fredholm alternative, any pole has to correspond to nullspace of
$P_1-z.$  Since
$$
\Im \ang{P_1 u,u} = \ang{W u,u}-(\Im z)\norm{u}^2,
$$
for $u$ to be in the nullspace would imply that $\Im z\geq 0;$
equality would further require that $\ang{W u,u}=0$ which is
forbidden by unique continuation.
\end{proof}

Finally, we recall the microlocal bound of propagation through
trapping by Datchev-Vasy \cite{Datchev-Vasy:Trapping}, as well as basic backward propagation of
singularities in the presence of complex absorption (see also Lemma
A.2 of \cite{NoZw09_1} for the latter).  In this context we will say that a
bicharacteristic $\gamma$ (by bicharacteristic we always mean an
integral curve of $H_{\re p_1}$ in
the characteristic set of $\re p_1-\re z$ where $p_1$ is the
semiclassical principal symbol of $P_1$), or a point on $\gamma$,
is \emph{ non-trapped} if $W(\gamma(T))>0$ for some $T\in\RR$, and is
\emph{trapped} otherwise.
We
 say it is
 \emph{forward non-trapped} if $W(\gamma(T))>0$ for some $T>0.$
In the 
terminology of \cite{Datchev-Vasy:Trapping} we say that\footnote{We have opposite signs for imaginary parts of $p_1$
  relative to \cite{Datchev-Vasy:Trapping}, so incoming and outgoing are
  interchanged.} the resolvent $(P_1-z)^{-1}$ is {\em semiclassically incoming} 
{\em with a loss of $h^{-1}$} provided that whenever $q\in T^*X$ is 
on a forward non-trapped bicharacteristic $\gamma$ of $P_1$ and $f=\cO(1)$ on 
$\gamma|_{[0,T]}$ then $(P_1-z)^{-1} f$ is $\cO(h^{-1})$ at $q$. 

\begin{lemma}(See \cite{Datchev-Vasy:Trapping}.) \label{lemma:Datchev-Vasy:trapping} 
Suppose 
that $P_1=h^2\Delta_g+\i W$ on $X$, $W\geq 0$,\ and $z\in \CC$ such
that $\im  z=\cO(h^\infty)$, $\re z\in[1-\delta,1+\delta]$ for
$0<\delta<1$ fixed. Assume that the resolvent is polynomially 
bounded, i.e., 
$$\norm{(P_1-z)^{-1}}_{\cL(L^2,L^2)} \leq C h^{-K}\quad 
\text{ for some } K.
$$ 
Then $(P_1-z)^{-1}$ is semiclassically incoming
with a loss of $h^{-1}$.
In particular, 
if $W(\gamma(T))>0$ for some $T>0$ and $\WFh(f)$ is disjoint from $\gamma|_{[0,T]}$, then $(P_1-z)^{-1} f=\cO(h^\infty)$ at $q$. 
 
Further, if $\chi\in\CI(X)$ and $T^*_{\supp\chi} X$ contains no
trapped points, then $\chi (P_1-z)^{-1}\chi$ is $\cO(h^{-1})$. 
\end{lemma}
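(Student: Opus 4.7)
The plan is to combine a semiclassical elliptic estimate in the region where the absorption $W$ is positive with backward propagation of singularities along the bicharacteristic. Let $q$ lie on a forward non-trapped bicharacteristic $\gamma$ of $\re p_1 - \re z$ with $W(\gamma(T)) > 0$ for some $T > 0$, set $u = (P_1 - z)^{-1} f$, and aim to control $u$ microlocally at $q$ in terms of $f$ microlocally near $\gamma|_{[0,T]}$.

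First I would exploit the fact that on a small neighborhood $U$ of $\gamma(T)$ one has $W \geq c > 0$, so the semiclassical principal symbol $p_1 - z = \abs{\xi}_g^2 - z + \i W$ is bounded below in modulus and $P_1 - z$ is semiclassically elliptic on $U$. For any $B_T \in \Psih^0$ microsupported in $U$, semiclassical elliptic regularity, combined with the polynomial a priori bound $\norm{u} \leq C h^{-K} \norm{f}$, gives
\[
\norm{B_T u}_{L^2} \leq C \norm{f}_{L^2} + \cO(h^\infty) \norm{u}_{L^2} \leq C \norm{f}_{L^2}.
\]
Next I would transport this estimate along $\gamma$ from $\gamma(T)$ back to $q$ by the standard positive-commutator propagation-of-singularities argument for $P_1 - z$. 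Iterating over a chain of microlocal cutoffs $B_0,\ldots,B_N = B_T$ subordinate to a small tubular cover of $\gamma|_{[0,T]}$ yields
\[
\norm{B_0 u}_{L^2} \leq C \norm{B_T u}_{L^2} + C h^{-1} \norm{E f}_{L^2} + \cO(h^\infty) \norm{u}_{L^2},
\]
where $E \in \Psih^0$ is microsupported in a neighborhood of $\gamma|_{[0,T]}$. The factor $h^{-1}$ is the usual loss coming from the commutator rescaling, and the crucial sign structure is provided by $W \geq 0$: the symbol of $[P_1, A^* A]/\i$ receives a contribution from $\{W, \abs{a}^2\}$ in addition to the Hamilton-derivative term $h H_{\re p_1}\abs{a}^2$, and one chooses $a$ so that the Hamilton-derivative term dominates \emph{in the direction dictated by the sign of $W$}, namely backward along the flow. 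This is the essential content of \cite{Datchev-Vasy:Trapping}. Combining the two displays delivers $\norm{B_0 u} \leq C h^{-1} \norm{f}$, which is the semiclassically incoming property with loss $h^{-1}$.

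For the ``in particular'' refinement, if $\WFh(f)$ is disjoint from $\gamma|_{[0,T]}$, then $E$ can additionally be chosen with microsupport disjoint from $\WFh(f)$, so $\norm{E f}_{L^2} = \cO(h^\infty)\norm{f}$ and the same chain of inequalities forces $B_0 u = \cO(h^\infty)$ at $q$. For the final global statement, if $T^*_{\supp \chi} X$ contains no trapped points, every point of $T^*_{\supp \chi} X \cap S^*X$ lies on a forward non-trapped bicharacteristic, and semiclassical ellipticity of $P_1 - z$ handles points outside the characteristic set. A compactness argument then covers $T^*_{\supp \chi} X$ by finitely many such neighborhoods, and a microlocal partition of unity assembles the pointwise $h^{-1}$ bounds of the first claim into the global operator-norm bound $\norm{\chi (P_1 - z)^{-1} \chi}_{\cL(L^2, L^2)} = \cO(h^{-1})$.

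The principal obstacle is the careful bookkeeping of signs in the positive-commutator calculation: one must verify that the $W \geq 0$ contribution cooperates with, rather than fights against, the chosen propagation direction. Once that sign structure is established, the remainder is a standard iterative application of propagation of singularities along a compact arc of bicharacteristic, and the polynomial a priori bound is needed only to absorb the $\cO(h^\infty)\norm{u}$ remainders into $\cO(h^\infty)\norm{f}$.
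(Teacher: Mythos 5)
The paper offers no proof of Lemma~\ref{lemma:Datchev-Vasy:trapping}; it is imported wholesale from \cite{Datchev-Vasy:Trapping}, so the only meaningful comparison is with the argument of that reference. Your sketch of the first two claims (the incoming property with loss $h^{-1}$ and its $\cO(h^\infty)$ refinement) follows the standard route there --- ellipticity on a neighborhood where $W\geq c>0$, then one-directional propagation back to $q$, with the polynomial a priori bound used only to absorb $\cO(h^\infty)\norm{u}$ remainders --- and is essentially correct, with one technical correction. The sign structure does \emph{not} come from a Poisson bracket $\{W,\abs{a}^2\}$ inside $[P_1,A^*A]/\i$: that term is $\cO(h)$ and has no sign. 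Because $P_1$ is not self-adjoint, the quantity $2\Im\ang{(P_1-z)u,A^*Au}$ produces the \emph{anticommutator} $A^*AW+WA^*A$, whose principal symbol is the product $2W\abs{a}^2\geq 0$. It is this $\cO(1)$ nonnegative term that fixes the admissible propagation direction (backward along the flow, i.e.\ incoming) and explains why the absorbing potential annihilates wavefront set outright where $W>0$; if you ran the computation as written, you would not find the sign you need.

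The genuine gap is in the final claim. You assert that if $T^*_{\supp\chi}X$ contains no trapped points, then every characteristic point over $\supp\chi$ lies on a forward non-trapped bicharacteristic. With the paper's definitions this is false: ``trapped'' means $W\circ\gamma\equiv 0$ for \emph{all} $t\in\RR$, so a point on the stable manifold of the trapped set --- whose forward orbit converges into $\cN$ without ever meeting $\{W>0\}$, but whose backward orbit does reach $\{W>0\}$ --- is non-trapped yet forward trapped. At such a point the incoming estimate you established gives nothing: control can only be propagated backward along the flow, and the forward orbit never reaches the elliptic region. Consequently the compactness/partition-of-unity assembly does not close. Handling exactly these points is the substantive content of the cited theorem: one must also invoke the dual (outgoing) estimate for the adjoint $P_1^*=h^2\Delta_g-\i W-\bar z$, whose propagation direction is reversed, and combine the incoming and outgoing estimates across the trapped set, where the polynomial a priori bound enters in an essential way rather than merely to absorb $\cO(h^\infty)$ errors.
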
 

A further result that is of crucial importance in avoiding losses
in 
our estimates is the following result of \cite{DaVa-trapping-2}:

\begin{lemma}\label{lemma:Datchev-Vasy:trapping2}(See \cite{DaVa-trapping-2}.)
With the notation of Lemma~\ref{lemma:Datchev-Vasy:trapping}, if
$$
\norm{(P_1-z)^{-1}}_{\cL(L^2,L^2)} \leq \frac{\alpha(h)}{h}
$$ 
and
if $\chi\in\CI(X)$ and $T^*_{\supp\chi} X$ contains no
trapped points, then for some $C>0$,
$$
\norm{(P_1-z)^{-1}\chi}_{\cL(L^2,L^2)} \leq C\frac{\sqrt{\alpha(h)}}{h}.
$$
\end{lemma}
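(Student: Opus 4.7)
The plan is to estimate $u := (P_1-z)^{-1}\chi f$, targeting $\|u\|\leq C\sqrt{\alpha(h)}/h\,\|f\|$, by combining the hypothesized global bound with the ``absorbed-mass'' estimate coming from the imaginary part of the equation and the non-trapping bound of Lemma~\ref{lemma:Datchev-Vasy:trapping}. Heuristically, the $\sqrt{\alpha(h)}$ should be thought of as a geometric mean between the trapping loss $\alpha(h)$ (when both source and target lie near trapping) and the non-trapping loss $1$ (when both lie away): here the source $\chi$ is non-trapping while the target is unrestricted.

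The first step is the absorbed-mass identity: pairing $(P_1-z)u = \chi f$ with $u$ in $L^2$ and taking imaginary parts, using $\Im z = O(h^\infty)$ and $\Im P_1 = W\geq 0$, yields
\[
\|W^{1/2}u\|^2 \leq \|f\|\|u\| + O(h^\infty)\|u\|^2. \qquad (\star)
\]
Next, choose $\tilde\chi \in C^\infty(X)$ with $\tilde\chi = 1$ on $\mathrm{supp}\,\chi$ and $T^*_{\mathrm{supp}\,\tilde\chi}X$ containing no trapped points; this is possible because $\mathrm{supp}\,\chi$ is disjoint from $\pi(\mathcal N)$. Since $\tilde\chi\chi = \chi$, Lemma~\ref{lemma:Datchev-Vasy:trapping} immediately gives $\|\tilde\chi u\|\leq (C/h)\|f\|$. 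The complementary piece satisfies $(P_1-z)(1-\tilde\chi)u = -[P_1,\tilde\chi]u$ (using $(1-\tilde\chi)\chi = 0$), so
\[
(1-\tilde\chi)u = -(P_1-z)^{-1}[P_1,\tilde\chi]u.
\]

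The heart of the argument is to estimate this right-hand side sharply. The commutator $[P_1,\tilde\chi]$ is a first-order semiclassical operator of size $h$ whose microsupport lies in $\mathrm{supp}\,\nabla\tilde\chi$, hence in the non-trapping region. At forward non-trapped points the \emph{semiclassically-incoming} property in Lemma~\ref{lemma:Datchev-Vasy:trapping} yields an improved $O(1/h)$ bound for $(P_1-z)^{-1}[P_1,\tilde\chi]u$, while at points of $\mathcal N$ the naive global bound $\alpha(h)/h$ is all that is available. Weighing the second contribution against $(\star)$, via a positive-commutator or escape-function argument that allows one to absorb the trapping contribution using $\|W^{1/2}u\|$ rather than $\|u\|$, produces an inequality of the form
\[
\|u\|^2 \leq \frac{C\alpha(h)}{h^2}\|f\|^2 + O(h^\infty)\|u\|^2,
\]
from which the claim follows. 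The main obstacle is precisely this microlocal accounting: the $\alpha(h)$ loss concentrated near trapping must be paired against the \emph{quadratic} constraint $(\star)$, not against $\|u\|$ directly, since the latter would only reproduce the trivial $\alpha(h)/h$ bound (or worse, $\alpha(h)^2/h^2$ after naive use of the global resolvent on the commutator). The construction of the correct weight and cutoffs making this step sharp is the technical core of \cite{DaVa-trapping-2}.
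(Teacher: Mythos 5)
First, note that the paper does not prove this lemma at all: it is quoted verbatim from \cite{DaVa-trapping-2}, so there is no internal argument to compare against. Your attempt is therefore judged on its own terms, and as written it has a genuine gap at precisely the step that produces the conclusion. Your reductions are fine: the energy identity $(\star)$ is correct, the bound $\|\tilde\chi u\|\leq Ch^{-1}\|f\|$ follows from the last part of Lemma~\ref{lemma:Datchev-Vasy:trapping}, and the identity $(1-\tilde\chi)u=-(P_1-z)^{-1}[P_1,\tilde\chi]u$ is correct. But the claim that ``a positive-commutator or escape-function argument'' then yields $\|u\|^2\leq C\alpha(h)h^{-2}\|f\|^2$ is asserted, not proved, and the specific mechanism you gesture at does not apply: $(\star)$ controls $\|W^{1/2}u\|$, which lives on $\supp W$, whereas $[P_1,\tilde\chi]u$ lives on $\supp\nabla\tilde\chi$, a set that is merely non-trapped and in general disjoint from $\supp W$. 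There is no way to ``pair the trapping contribution against $(\star)$'' without an additional quantitative propagation step that you have not supplied.

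Worse, the decomposition as you have set it up appears to be a dead end for capturing the square root. On $\supp\nabla\tilde\chi$ the best available bound is the two-sided non-trapping estimate $\|\psi(P_1-z)^{-1}\chi\|\leq Ch^{-1}$, which (together with elliptic regularity for the $hD$ factor in the commutator) gives $\|[P_1,\tilde\chi]u\|\leq Ch\cdot h^{-1}\|f\|=C\|f\|$, and this is sharp in general. Feeding that back through the only operator bound valid on the full space, namely $\|(P_1-z)^{-1}\|\leq\alpha(h)/h$, returns $\|(1-\tilde\chi)u\|\leq C\alpha(h)h^{-1}\|f\|$ --- exactly the trivial bound you set out to beat. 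To reach $\sqrt{\alpha(h)}/h$ by this route you would need $\|[P_1,\tilde\chi]u\|\leq C\alpha(h)^{-1/2}\|f\|$, i.e.\ a \emph{gain} over the non-trapping estimate in the non-trapped region, which is not available. The square-root improvement in \cite{DaVa-trapping-2} comes from a genuinely different global mechanism (exploiting the outgoing/incoming structure of the resolvent and its adjoint together with the a priori bound, rather than sharpening this commutator term), and that mechanism is the entire content of the lemma. As it stands, your write-up is an honest reduction to the hard step followed by a citation of the hard step, not a proof.
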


\section{Propagation and damping estimates}

We now switch from complex absorbing potentials back to damping:
set
$$
P=P(h)=h^2\Delta_g +\i h a
$$
and consider the equation 
$$
(P-z)u=f\,,\quad z\in[1-\delta,1+\delta]\,.
$$
We also set
$\Sigma=p^{-1}(0)\subset T^*X$ where $p=\sigma_h(P)$. 
Let us start by recalling a classical result about propagation
estimates (see, e.g.\ Theorem~12.5 of \cite{Zworski:semiclassical} for
a proof by conjugation to normal form; an alternative is the usual
commutator argument as described in \cite{Hormander:enseignement} in
the homogeneous setting and \cite{Chr-disp-1} in the semiclassical setting):
\begin{lemma}\label{l:positive commut}
Suppose $q\in\Sigma$ and for some $T>0$ the forward 
  bicharacteristic $\exp([0,T]\sH_p)(q)$ is disjoint from a compact
  set $K$. Then there are $Q,Q'$ which are elliptic at $q$, resp.\
  $\exp(T\sH_p)(q)$, with $\WFh'(Q')\cap K=\emptyset$ such that
  $\|Qu\|\leq \|Q'u\|+Ch^{-1}\|f\|$, $u=(P-z)^{-1}f$.
\end{lemma}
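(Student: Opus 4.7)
I would prove this by the standard semiclassical positive commutator argument, adapted to handle the skew-adjoint damping term $\i ha$; an alternative is conjugation to the normal form $hD_{x_1}$ as in Theorem~12.5 of \cite{Zworski:semiclassical}.

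\emph{Escape function.} The compact segment $\gamma := \exp([0,T]\sH_p)(q)$ is disjoint from $K$, so it has an open tubular neighborhood $U$ with $\overline U \cap K = \emptyset$. I would construct a real symbol $b \in C_c^\infty(T^*X;\mathbb R)$ supported in $U$ and elliptic at $q$, together with non-negative cutoffs $\chi_0, \chi_T \in C_c^\infty(T^*X)$ elliptic at $q$ and $\exp(T\sH_p)(q)$ respectively, with $\supp \chi_T \subset U$, such that on $\gamma$
\[
\sH_p(b^2) \leq -c\chi_0^2 + \chi_T^2.
\]
This is standard: parametrize $\gamma$ by flow time $t$, take $b^2$ to be $\phi(t)$ times a transverse cutoff, with $\phi$ decreasing on $[0,T-\varepsilon]$ and with a controlled rise near $T$.

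\emph{Commutator and absorption.} Setting $B = \mathrm{Op}_h^w(b)$, $Q = \mathrm{Op}_h^w(\chi_0)$, $Q' = \mathrm{Op}_h^w(\chi_T)$, and separating $P$ into its self-adjoint and skew-adjoint parts, the identity $(P-z)u = f$ yields
\[
h\langle \mathrm{Op}(\sH_p b^2) u, u\rangle + 2h\langle \mathrm{Op}(ab^2) u, u\rangle = 2\Im\langle f, B^2 u\rangle + 2(\Im z)\|Bu\|^2 + \cO(h^2)\|u\|^2.
\]
The damping contribution $2h\langle\mathrm{Op}(ab^2)u,u\rangle$ is non-negative up to an $\cO(h^2)\|u\|^2$ error by sharp G\aa rding (since $a \geq 0$), but when combined with sharp G\aa rding on $-\sH_p(b^2) \geq c\chi_0^2 - \chi_T^2$ it appears on the wrong side of the would-be bound for $\|Qu\|^2$, alongside an $\|Bu\|^2$ contribution from the $\Im z$ term. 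To absorb both contributions I would replace $b$ by the weighted symbol $b_M := e^{-MF}b$, where $F$ is smooth near $\gamma$ with $\sH_p F \equiv 1$. The identity $-\sH_p(b_M^2) = 2Mb_M^2 - e^{-2MF}\sH_p(b^2)$ produces a large positive $2Mb_M^2$ on the commutator side, so taking $M$ larger than $\|a\|_\infty + |\Im z|/h$---of size $\cO(1)$ in the regime $\Im z = \cO(h)$---I can absorb both the damping and the $\Im z$ terms. Cauchy--Schwarz $|\langle f, B_M^2 u\rangle| \leq \varepsilon h\|B_M u\|^2 + \varepsilon^{-1}h^{-1}\|f\|^2$ then gives $\|B_M u\|^2 \leq C\|Q'u\|^2 + Ch^{-2}\|f\|^2$ modulo the usual $\cO(h^\infty)\|u\|^2$ tail; since $b_M$ is elliptic at $q$, taking square roots yields the stated estimate.

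\emph{Main obstacle.} The only genuinely non-standard point is that the damping lives at semiclassical order $h$, exactly the same order as the principal commutator $h\sH_p(b^2)$. Tracking its sign is therefore essential, and the non-negativity $a \geq 0$ is precisely what is needed: without it the exponential-weight step would fail to produce a usable coefficient on $\|B_M u\|^2$, and the propagation estimate would have to be weakened relative to the self-adjoint case.
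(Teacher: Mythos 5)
The paper offers no proof of this lemma, referring instead to the standard propagation arguments (Theorem~12.5 of \cite{Zworski:semiclassical}, or the commutator method of \cite{Hormander:enseignement}, \cite{Chr-disp-1}), so your positive-commutator strategy is the intended route, and your commutator identity is correct. However, there is a genuine error in the orientation of your escape function. You require $\sH_p(b^2)\leq -c\chi_0^2+\chi_T^2$ with $b\in C_c^\infty$ elliptic at $q$ and $\chi_T$ supported near the \emph{forward} endpoint $\exp(T\sH_p)(q)$. No such $b$ exists: since $b$ is compactly supported in the tube and $b(q)\neq 0$, the function $t\mapsto b^2(\exp(t\sH_p)(q'))$ must rise from $0$ at negative flow times, so $\sH_p(b^2)>0$ on a region on the \emph{backward} side of $q$, where $\chi_T$ vanishes. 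The weight does not rescue this: absorbing that region into $2Mb_M^2$ would require $|\sH_p(b^2)|\leq 2Mb^2$ near the turn-on of $b$, which fails for every fixed $M$ as $b\to 0$. If you instead enlarge $\chi_T$ to cover the turn-on region, $Q'$ acquires wavefront set on the backward flow-out of $q$, which is not assumed to avoid $K$, and you have proved control from the wrong endpoint. The correct choice --- as one checks on the normal form $hD_{x_1}$, where your identity reduces to $h\int \phi'|u|^2 = 2\Im\langle f,\phi u\rangle$ --- is the opposite one: $\sH_p(b^2)\geq c\chi_0^2-\chi_T^2$, i.e.\ $b^2=\phi(t)\psi$ with $\phi$ increasing on $[-\delta,T-\ep]$ and decreasing to zero only on $[T-\ep,T+\delta]$, where $\chi_T$ is elliptic.

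This sign error also undermines your closing discussion. With the correct orientation, the damping term $2h\langle\mathrm{Op}(ab^2)u,u\rangle$ lands on the side of the inequality where its nonnegativity (sharp G\aa rding, using $a\geq 0$) makes it harmless outright; no absorption is needed for it. Moreover, even without a sign on $a$, the weight works: $M$ need only dominate $\sup|a|+|\Im z|/h$, not $a$ itself, because the damping enters at order $h$ and the time interval is fixed. So the lemma is an instance of the classical real-principal-type propagation estimate, valid for arbitrary bounded lower-order perturbations --- which is precisely why the paper can cite \cite{Zworski:semiclassical} without further comment --- and the claim that $a\geq 0$ is what saves the exponential-weight step is not correct. (A smaller point: the sharp G\aa rding and Cauchy--Schwarz remainders are $\cO(h)\|\widetilde B u\|^2$ with $\widetilde B$ microsupported in the tube rather than $\cO(h^\infty)\|u\|^2$; one absorbs them into $Mh\|B_Mu\|^2$ after slightly enlarging $b$, or iterates the estimate to push the error to $\cO(h^N)$.)
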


\section{Proof of Theorem \ref{T:bbox1}}\label{s:bbox1}

For the moment, we consider only the case where $z\in
[1-\delta,1+\delta]$. We have
$$
P=h^2\Lap_g+\i ha
$$
and we additionally write $$P_1 = h^2\Delta_g + \i a $$ for the operator with
damping replaced by absorption.
Choose an open set $V_1$ such that $\cN \Subset V_1 \Subset O_1$.  
Let $B_1, \varphi \in C_0^\infty (X)$ be  smooth functions with $B_1|_{V_1} =
1$, $\supp B_1 \subset O_1$,
$\varphi = 1$ on $\supp \nabla B_1$ and $\supp\varphi \cap
\cN=\emptyset$.   
We observe that $\cN$ satisfies the assumptions of Lemma~\ref{lemma:Datchev-Vasy:trapping2}, so that
\[
\| (P_1 -z)^{-1} \varphi u \| \leq C h^{-1} \sqrt{\alpha(h)}
\| \varphi u \|.
\]
Then, noticing that $a$ and
$B_1$ have disjoint supports, we have
\begin{equation}\label{B1estimate}
\begin{aligned}
\| B_1 u \| & = \| (P_1-z)^{-1} (P_1-z) B_1 u \| \\
& = \| (P_1-z)^{-1} (P-z) B_1 u \| \\
& = \| (P_1-z)^{-1} (B_1 (P -z) + [P, B_1] ) u \| \\
& \leq \| (P_1-z)^{-1} B_1 (P -z) u \| + \| (P_1 -z)^{-1} \varphi [P,
B_1]  \varphi u \| \\
& \leq \frac{\alpha(h)}{h} \| B_1 (P-z) u \| + C \sqrt{ \alpha(h) } \|
\varphi u \| 
\end{aligned}
\end{equation}

Since for $\rho \notin \cN$ the curve $e^{t\hamvf_p}(\rho)$ passes
through $\{a>0\},$ each such bicharacteristic curve must certainly enter the compact
set $X \backslash O_1.$  Thus by compactness, there exists $\ep_0>0$
such that every such curve passes through $\{a\geq \ep_0\}.$  We now take
a cutoff function $\chi\geq 0$ with $\supp \chi \subset \{a>\ep_0/2\},$
and $\chi=1$ whenever $a\geq \ep_0;$ hence every controlled geodesic
passes through $\{\chi=1\}.$

We next recall a classical lemma concerning the  propagation of
singularities in the presence of geometric control. (See
\cite{BuZw04}, which builds on a semiclassical version of \cite{Hormander:enseignement},
proved in \cite{SjZw02}.)  This is a slight variation on Lemma
\ref{l:positive commut}, and can of course also be proved using the original
positive commutator argument (see \cite{Chr-disp-1}).
\begin{lemma} (See \cite{BuZw04})
Let $U$ be an open neighbourhood of $\cN$, $\chi\in\CI(X)$ as above.
If  $B_0=\Psi_h^{0,0}(X)$ is such
that $\WF'_h(B_0)\subset T^*X\setminus U$, then for $z$ real near 1, 
$$
\|B_0u\|\leq \frac{C}{h}\|(P -z)u\| +\|\chi u\|+\cO(h^\infty )\|u\|
$$
\end{lemma}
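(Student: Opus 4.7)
The strategy is to split $T^*X$ into the elliptic and characteristic regions of $P-z$, dispose of the former by microlocal elliptic regularity, and for the latter propagate singularities forward along $\sH_p$ until they enter the control region $\{\chi = 1\}$, where the pointwise bound $\|\chi u\|$ takes over.

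Since the principal symbol of $P - z$ is $|\xi|_g^2 - \re z$ (the damping $\i h a$ being subprincipal) and $\re z$ is near $1$, the operator $P - z$ is microlocally elliptic off the characteristic set $\Sigma_z = p^{-1}(\re z)$. A standard semiclassical parametrix construction yields
\[
\|B' u\| \leq C\|(P-z)u\| + \cO(h^\infty)\|u\|
\]
for any $B' \in \Psi_h^{0,0}(X)$ with $\WF'_h(B') \cap \Sigma_z = \emptyset$. This handles the microlocal mass of $u$ in $\WF'_h(B_0) \setminus \Sigma_z$ with a bound that is stronger than required.

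On the compact set $W := \WF'_h(B_0) \cap \Sigma_z \subset T^*X \setminus U$, every $\rho$ lies off $\cN$ and so, by the compactness argument immediately preceding the lemma, its forward bicharacteristic enters $\{\chi = 1\}^\circ$ at some positive time $t_\rho$. Using compactness of $W$, continuity of the flow, and openness of $\{\chi = 1\}^\circ$, I would extract finitely many $\rho_j \in W$, open neighborhoods $V_j \ni \rho_j$ covering $W$, and times $T_j > 0$ such that $\exp(T_j \sH_p)(V_j) \Subset \{\chi = 1\}^\circ$. Applying Lemma~\ref{l:positive commut} at each $\rho_j$ yields operators $Q_j, Q_j' \in \Psi_h^{0,0}(X)$ with $Q_j$ elliptic at $\rho_j$, $\WF'_h(Q_j') \Subset \{\chi = 1\}^\circ$, and
\[
\|Q_j u\| \leq \|Q_j' u\| + C h^{-1} \|(P-z)u\|.
\]
Since $\chi \equiv 1$ on a neighborhood of $\WF'_h(Q_j')$, symbol calculus gives $\|Q_j' u\| \leq C\|\chi u\| + \cO(h^\infty)\|u\|$.

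I would conclude by a microlocal partition of unity subordinate to the $V_j$ together with a neighborhood of $\WF'_h(B_0) \setminus \Sigma_z$: this expresses $B_0$ microlocally as a sum of operators each dominated by one of the $Q_j$ or controlled by the elliptic estimate, and the triangle inequality yields the stated bound after absorbing constants. The main obstacle is arranging the endpoint cutoffs $Q_j'$ so that their wavefront sets sit strictly inside $\{\chi = 1\}^\circ$ while remaining elliptic at the propagated points; this is precisely where the geometric control furnished by $\chi$ is invoked, and it relies on openness of $\{\chi = 1\}^\circ$ and continuity of the geodesic flow to guarantee a uniform time $T = \max_j T_j$ across the compact set $W$.
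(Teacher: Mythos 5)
The paper does not actually prove this lemma---it is quoted from Burq--Zworski \cite{BuZw04} (with the commutator route of \cite{Chr-disp-1, SjZw02} indicated as an alternative)---so your proposal is measured against that intended argument rather than an internal proof. Your overall strategy (elliptic estimate off $\Sigma_z$, then propagation along $\sH_p$ into $\{\chi=1\}$, assembled by a microlocal partition of unity) is the right one, and the elliptic step, the compactness/uniform-time argument, and the final patching are all fine.

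There is, however, one genuine gap: you assert that every $\rho\in\WF_h'(B_0)\cap\Sigma_z$ has a \emph{forward} bicharacteristic entering $\{\chi=1\}$, "since $\rho\notin\cN$." But $\cN$ is defined by the vanishing of $a$ along the \emph{entire} orbit, so $\rho\notin\cN$ only guarantees that the orbit meets $\{a>0\}$ at \emph{some} time, possibly only a negative one. The forward-trapped set $\Gamma_+=\{\rho:\ \forall t>0,\ a\circ \e^{t\sH_p}(\rho)=0\}$ generally strictly contains $\cN$ and is not confined to any small neighbourhood $U$ of $\cN$ (e.g.\ for a single hyperbolic closed geodesic, $\Gamma_+$ contains the whole stable manifold out to $\partial\{a=0\}$); at such points your covering argument and your application of Lemma~\ref{l:positive commut} --- which bounds $\|Qu\|$ at $q$ by $\|Q'u\|$ at the \emph{forward} point $\exp(T\sH_p)(q)$ --- simply do not apply. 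To close the gap you must also propagate in the reversed direction (control at $q$ from a point on the \emph{backward} orbit). This is not symmetric for $P=h^2\Delta_g+\i h a$: in the positive-commutator identity the damping contributes a term with principal symbol $2a\,\sigma(A)\geq 0$ whose sign is favourable for the forward-endpoint version you used, but unfavourable for the reversed one; there it must be absorbed using the a priori identity $h\langle au,u\rangle=\Im\langle (P-z)u,u\rangle\leq\|(P-z)u\|\,\|u\|$, which produces an extra term of size $h^{-1}\|(P-z)u\|\,\|u\|$ (harmless for the eventual application, but an additional ingredient your write-up does not contain, and the reason the clean $\cO(h^\infty)\|u\|$ error requires some care or, as in \cite{BuZw04}, a contradiction argument with semiclassical defect measures, which handles both flow directions at once via the transport equation $\sH_p\mu=-2a\mu$).
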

Since $\supp \varphi$ and $\supp (1-B_1)$ lie inside the controlled region, we
can write : 
\[
\| (I - B_1) u \| \leq C h^{-1} \| (P-z) u \| + C\|\chi u \| +\cO(h^\infty )\|u\|
\] 
and
\[
\| \varphi u \| \leq C h^{-1} \|(P-z) u \| + C\| \chi u \| +\cO(h^\infty )\|u\|\,.
\]
But
\begin{align*}
\| \chi u \|^2 & \leq C \langle a u , u \rangle\\
& = Ch^{-1} \Im \langle (P -z) u, u \rangle \\
& \leq Ch^{-1}\| (P -z ) u \| \| u \|.
\end{align*}
Starting with \eqref{B1estimate}, we deduce from the above inequalities that
\begin{align*}
\|B_1 u\|^2&\leq C\Big( \frac{\alpha^2(h)}{h^2}\|(P-z)u\|^2 +C
\alpha(h) \|\varphi u\|^2\Big)\\
& \leq C \Big( \frac{\alpha^2(h)}{h^2}\|(P-z)u\|^2 +
C\alpha(h)\|\chi u\|^2+\cO(h^\infty )\|u\|^2\Big).
\end{align*}
Hence, we have 
\begin{align*}
\| u \|^2 & \leq C ( \| B_1 u \|^2 + \| (I - B_1) u \|^2 ) \\
& \leq C \Big( \frac{\alpha^2(h)}{h^2} \| (P-z) u \|^2 +  \alpha(h) \| \chi
u \|^2 +\cO(h^\infty) \| u \|^2 \Big) \\
& \leq C \Big(  \frac{\alpha^2(h)}{h^2} \|(P-z) u \|^2 + \frac{\alpha(h)}{h} \| (P-z) u \| \| u \|\\
& \quad +\cO(h^\infty)\|u\|^2 \Big) \\
& \leq C \Big(  \frac{\alpha^2(h)}{h^2} \| (P-z) u \|^2
+ \frac{4\epsilon^{-1} \alpha^2(h)}{h^2} \| (P-z) u \|^2  + \epsilon\| u \|^2 \\
& \quad +\cO(h^\infty)\|u\|^2 \Big).
\end{align*}
If $\epsilon$ is small, we can absorb the last two terms in the
above inequality on the left hand side, and we obtain
\[
\| u \| \leq C \frac{\alpha(h)}{h} \| (P-z) u \|.
\] 
Now simply observe that by the triangle inequality this bound is still valid if we add to $z$ an
imaginary part that satisfies
$$
|\Im z |\leq h \alpha^{-1}(h) C' 
$$
for $C'$ such that $C'C<1$, and this concludes the proof of the
theorem. 

\begin{rem}

If $\alpha(h)$ is not of polynomial nature, then Lemma~\ref{lemma:Datchev-Vasy:trapping2} cannot be used. As a result, the square root in Equation \eqref{B1estimate} must be removed. The rest of the argument is the same, and we end up with the estimate given in Remark \ref{R:non polynomial}. Note also that the energy decay rates for the damped wave equation are of course  weaker than in the case where Lemma~\ref{lemma:Datchev-Vasy:trapping2} can be applied.
\end{rem}

In order to apply Theorem \ref{T:bbox1} to the situation of the stationary
damped wave operator, we now state a corollary where the Schr\"odinger
operator depends mildly on $z$.

\begin{cor}
\label{C:bb-1}
Let $X$ be a compact manifold without boundary, and let $\tP(h,z)$ be the
modified operator 
$$\tP(h,z)=h^2\Delta_g +\i h \sqrt{z}a -z\,.$$

Assume  that for some $\delta \in (0,1)$
fixed  there is a function $1 \leq \alpha(h) = \cO(h^{-K} )$, for some
$K \in \ZZ$, such that
\[
\| (h^2\Delta_g +\i a-z)^{-1} \|_{L^2 \to L^2} \leq \frac{\alpha(h)}{h},
\]
for $z \in [1-\delta, 1+
\delta]$.  Then there exists $C, c_0  >0$ such that 
\[
\| \tP(h,z)^{-1} \|_{L^2 \to L^2} \leq C \frac{\alpha(h)}{h},
\]
for 
\[
z \in  [1-c_0/\alpha(h), 1 + c_0/\alpha(h)] + \i[-c_0,c_0]h/\alpha(h).
\]

\end{cor}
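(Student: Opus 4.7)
The plan is to treat $\tilde P(h,z)$ as a small multiplicative perturbation of the operator $P(h)-z = h^2\Delta_g + iha - z$ already controlled by Theorem~\ref{T:bbox1}, and then invert by a Neumann series argument. For $z$ in a sufficiently small complex neighborhood of $1$, the principal branch of $\sqrt{z}$ is well-defined and satisfies $|\sqrt{z}-1| \leq C|z-1|$, so one has the identity
\[
\tilde P(h,z) \;=\; (P(h)-z) \;+\; ih(\sqrt{z}-1)\,a,
\]
which, after factoring the leading term on the left, becomes
\[
\tilde P(h,z) \;=\; (P(h)-z)\bigl[\,I + (P(h)-z)^{-1}\, ih(\sqrt{z}-1)\,a\,\bigr].
\]

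First I would verify that the region specified in the corollary is contained in the rectangle for which Theorem~\ref{T:bbox1} applies. Since $\alpha(h)\geq 1$, for $c_0 < \delta$ the interval $[1-c_0/\alpha(h),\,1+c_0/\alpha(h)]$ lies in $[1-\delta,1+\delta]$, while the imaginary part belongs to $[-c_0,c_0]h/\alpha(h)$, which is precisely the window supplied by Theorem~\ref{T:bbox1}. Consequently the bound
\[
\|(P(h)-z)^{-1}\|_{L^2\to L^2} \leq C\,\frac{\alpha(h)}{h}
\]
holds throughout the region under consideration.

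Next I would estimate the size of the perturbation. Since $a \in C^\infty(X)$ is bounded and $|\sqrt{z}-1| \leq C|z-1| \leq C(c_0/\alpha(h) + c_0 h/\alpha(h)) \leq 2Cc_0/\alpha(h)$ for $h$ small, we have
\[
\bigl\|(P(h)-z)^{-1}\, ih(\sqrt{z}-1)\,a\bigr\|_{L^2\to L^2}
\;\leq\; \frac{C\alpha(h)}{h}\cdot h\cdot \frac{2Cc_0}{\alpha(h)}\cdot \|a\|_{L^\infty}
\;\leq\; C'\, c_0.
\]
By choosing $c_0>0$ small enough that $C'c_0 \leq 1/2$, the operator in brackets is invertible by a Neumann series, with norm at most $2$.

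Finally, combining these two facts yields
\[
\|\tilde P(h,z)^{-1}\|_{L^2\to L^2}
\;\leq\; 2\,\|(P(h)-z)^{-1}\|_{L^2\to L^2}
\;\leq\; \frac{2C\,\alpha(h)}{h},
\]
which is the stated estimate (after absorbing constants). No step here should present a real obstacle: the mildest care is needed only in checking that the constants from the Neumann series and from Theorem~\ref{T:bbox1} can be reconciled by shrinking $c_0$ once at the end, and that $\sqrt{z}$ remains analytic throughout the small complex neighborhood, which is guaranteed by staying well away from the branch cut at the negative real axis.
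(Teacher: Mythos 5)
Your proposal is correct and follows essentially the same route as the paper: both treat $\tP(h,z)$ as a small perturbation of the operator controlled by Theorem~\ref{T:bbox1} and invert by Neumann series, using that the perturbation has norm $O(c_0 h/\alpha(h))$ while the unperturbed resolvent is $O(\alpha(h)/h)$. Your version is if anything slightly cleaner, since you bound $|\sqrt{z}-1|\leq C|z-1|$ directly rather than expanding $\sqrt{z}=1+r+\i\beta$ and tracking real and imaginary parts separately as the paper does.
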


\begin{proof} The real part of the perturbation is manifestly bounded by a
  small multiple of $h/\alpha(h)$ and can thus be perturbed away by
  Neumann series.  It thus only remains
to check that the size of the imaginary part of the
perturbation: 
\[
\Re (h a - \sqrt{z} ha) - \Im z
\]
can also be made much smaller than $h / \alpha(h)$.

Take $\sqrt{z} = 1 + r + \i \beta$ for $r, \beta$ to be determined.  Then $z =
(1+r)^2 - \beta^2 + 2 (1 + r) \i \beta$.  Then
\[
\Re (h a - \sqrt{z} ha) - \Im z = h a (1-(1 + r)) - 2(1 + r) \beta =
\epsilon h / \alpha(h)
\]
for $\epsilon>0$ small if, say, $| \beta | \leq \epsilon h/ 2
\alpha(h)$ and $| r | \leq \alpha^{-1}(h)$.  
Squaring, we obtain
\[
z \in [1-c_0/\alpha(h), 1 + c_0/\alpha(h)] + \i[-c_0,c_0]h/\alpha(h).
\]

\end{proof}

\section{Examples}\label{s:examples}
In this section, we briefly outline some known microlocal resolvent
estimates, state the two different stationary damped wave operator
estimates, and then draw conclusions about solutions to the damped
wave equation \eqref{eq:dwe}.

\subsection{A normally hyperbolic trapped set}

In this section, we treat the case in which the trapped set is a
smooth manifold in $S^*X$ around which the dynamics is \emph{normally
  hyperbolic}.  In this case, estimates for the resolvent with a
complex absorbing potential have been obtained by Wunsch-Zworski
\cite{WuZw}.  A particular case of interest is the ``photon sphere''
for the Kerr black hole geometry, where the phase space is $6$-dimensional $\cN$ is a symplectic
submanifold, diffeomorphic to $T^* S^2$---see section 2 of \cite{WuZw}
for details on this application, and
\cite{Vasy-Dyatlov:Microlocal-Kerr}
for placing it in actual Kerr-de Sitter space.  Another special case is of course that of
a single hyperbolic closed geodesic (discussed further in the following section).

The precise formulation of normal hyperbolicity used here is as
follows: we define the backward-forward trapped sets by
$$
\Gamma_{\pm}=\{\rho\in T^*X: \forall t\gtrless 0, a\circ \e^{t\sH_p}(\rho)=0 \}
$$
Then of course
$$
\cN=\Gamma_+ \cap \Gamma_-,
$$
where we have now ceased to restrict to a single energy surface (so
$\cN \subset T^*X$ is a homogeneous subset in view of the
homogeneity
of $p$) in order to employ the
terminology of symplectic geometry more easily.

We make the following assumptions on this intersection:
\begin{enumerate}
\item \label{gammas}$\Gamma_\pm$ are codimension-one 
smooth
manifolds intersecting transversely at $\cN.$ (It
is not difficult to verify that $\Gamma_\pm$ must then be coisotropic and
$\cN$ symplectic.)

\item \label{normflow} The flow is hyperbolic in the normal directions
to $K$ within the energy surface $S^*X$: there exist subbundles $E^\pm$ of
$T_{\cN} (\Gamma_\pm)$ such that at $p \in S^*X$
$$
T_{\cN} \Gamma_\pm=T \cN \oplus E^\pm,
$$
where $$d(\exp(\hamvf_p) :E^\pm \to E^\pm$$ and
there exists $\theta>0$ such that
\begin{equation}
\label{eq:normh}
\| {d(\exp(\hamvf_p )(v)} \|  \leq C e^{-\theta | t | } \| {v} \| \
\text{for all } v \in E^{\mp},\ \pm t \geq 0.
\end{equation}
\end{enumerate}
As discussed in \cite{WuZw}, these hypotheses as stated are not
structurally stable, but they do follow (at least up to loss of
derivatives) from the more stringent
hypothesis that the dynamics be \emph{$r$-normally-hyperbolic} for
every $r$ in the sense of
\cite[Definition 4]{Hirsch-Pugh-Shub}.  This implication, and the
structural stability of the hypotheses, follows from a deep
theorem of Hirsch-Pugh-Shub \cite{Hirsch-Pugh-Shub} and Fenichel \cite{Fenichel}.

As a consequence of the estimates of \cite{WuZw} for resolvents, we
then obtain the following estimate for the damped operator:

\begin{thm}\label{T:normhyp}
Let $(X,g)$ satisfy the dynamical conditions enumerated above.  Then
we have
\[
\| (h^2\Delta_g+\i h 
a -z )^{-1} \|_{L^2 \to L^2} \leq C \frac{\abs{\log h}}{h},
\]
for $z \in [1-\delta,1+\delta] + \i [-c_0 , c_0 ] h/\abs{\log h}$.
\end{thm}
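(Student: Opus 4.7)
The plan is to reduce the statement directly to Theorem~\ref{T:bbox1}, using the resolvent bound for the complex absorbing potential model established by Wunsch--Zworski~\cite{WuZw}. First I would fix an open set $O_1$ with $\pi(\cN)\Subset O_1$ and $\supp a\subset X\setminus \overline{O_1}$, and choose a nonnegative $W_1\in\CI(X)$ with $\supp W_1\subset X\setminus\overline{O_1'}$ for some slightly smaller neighborhood $O_1'$ of $\pi(\cN)$, so that $W_1$ is elliptic on $\supp a$. Then the trapped set of the bicharacteristic flow for $P_1(h)=h^2\Delta_g+\i W_1$ inside $p^{-1}(1)$ coincides with $\cN$ (every bicharacteristic not trapped eventually meets $\{W_1>0\}$).

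Second, I would verify that the dynamical hypotheses \eqref{gammas} and \eqref{normflow} for $\cN$ are exactly those under which~\cite{WuZw} proves the logarithmic resolvent bound
\[
\| (P_1(h)-z)^{-1}\|_{L^2\to L^2}\le C\,\frac{\abs{\log h}}{h},\qquad z\in[1-\delta,1+\delta].
\]
This supplies the hypothesis of Theorem~\ref{T:bbox1} with $\alpha(h)=\abs{\log h}$, which is certainly $\cO(h^{-K})$ for any $K>0$ and therefore fits into the polynomial framework (so Lemma~\ref{lemma:Datchev-Vasy:trapping2} applies without modification).

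Third, applying Theorem~\ref{T:bbox1} with this $\alpha(h)$ directly yields
\[
\|(h^2\Delta_g+\i h a-z)^{-1}\|_{L^2\to L^2}\le C\,\frac{\abs{\log h}}{h}
\]
on the strip $z\in[1-\delta,1+\delta]+\i[-c_0,c_0]\,h/\abs{\log h}$, which is exactly the claim.

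The only non-routine step is the verification that the complex absorbing potential model $P_1$ falls squarely within the setting of~\cite{WuZw}; this is essentially bookkeeping, since the normal hyperbolicity hypotheses depend only on the Hamilton flow near $\cN$ (where neither $a$ nor $W_1$ are supported), and the Wunsch--Zworski estimate is proved in a black-box manner that accommodates any complex absorbing potential supported away from $\cN$. Once this identification is in hand, the theorem is immediate from Theorem~\ref{T:bbox1}.
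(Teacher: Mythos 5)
Your proposal is correct and matches the paper's own (very brief) argument: the paper likewise obtains Theorem~\ref{T:normhyp} by feeding the Wunsch--Zworski bound $\alpha(h)=\abs{\log h}$ for the complex-absorption model into Theorem~\ref{T:bbox1}. The only cosmetic difference is that you introduce a separate potential $W_1$, whereas Theorem~\ref{T:bbox1} is stated with $a$ itself as the absorbing potential; taking $W_1=a$ removes even that discrepancy.
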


This estimate, or more precisely its refinement in
Corollary~\ref{C:bb-1},  
 provide a corresponding energy decay
estimate for solutions to the damped wave equation \eqref{eq:dwe}.  In
order to avoid irritating issues of projecting away from constant
subspaces, etc., we assume that $u_0 \equiv u(x,0)= 0$.

\begin{cor}
\label{C:damp1}
Assume the hypotheses of Theorem \ref{T:normhyp} hold, and let $u$ be
a solution to \eqref{eq:dwe} with $u_0 = 0$, and $u_1 \in H^s$ for
some $s\in (0,2]$.   Then there exists a constant $C = C_{
  s}>0$ such that
\[
E(u,t) \leq C e^{ -s t^{1/2}/C} \|u_1\|_{H^s}^2.
\]

\end{cor}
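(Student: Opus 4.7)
The plan is to translate the semiclassical bound of Corollary \ref{C:bb-1} into a high-frequency estimate on the stationary damped wave operator, apply the standard contour-deformation machinery that converts a logarithmic resolvent bound in a logarithmic strip into sub-exponential energy decay, and then interpolate to produce the stated $s$-dependent exponent.

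First, I would use Theorem \ref{T:normhyp} to take $\alpha(h) = |\log h|$. Substituting $h = \langle\tau\rangle^{-1}$, $z = h^2 \tau^2$ in Corollary \ref{C:bb-1} rewrites $\tP(h,z) = h^2(\Delta_g + \i\tau a - \tau^2)$, and the corollary translates into
\[
\|(\Delta_g + \i\tau a - \tau^2)^{-1}\|_{L^2 \to L^2} \leq C\,\frac{\log\langle\tau\rangle}{\langle\tau\rangle}
\]
uniformly for $\tau$ in a region $\{|\im\tau| \leq c_0/\log\langle\tau\rangle\}$ with $|\re\tau|$ large, while at bounded frequencies absence of real-axis poles follows from a standard Fredholm argument applied to $\i\tau a$. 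In the first-order formulation $\pa_t U = \cA U$ on the energy space $\cH = H^1 \times L^2$, this becomes a bound $\|(\cA - \lambda)^{-1}\|_{\cL(\cH)} \leq C\log|\im\lambda|$ in a strip of width $\sim 1/\log|\im\lambda|$ about the imaginary axis.

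I would then invoke the standard translation of such a bound into sub-exponential energy decay, as carried out for essentially this situation by Lebeau \cite{Leb93}, Hitrik \cite{Hit03}, and Christianson \cite{Chr-wave-2}. Concretely, representing the solution by an inverse Fourier--Laplace integral of the stationary resolvent applied to $u_1$, one deforms the contour into the logarithmic strip; the factor $e^{-\i t \tau}$ then contributes $e^{-t/\log|\tau|}$, which is balanced against the resolvent growth $\log|\tau|/|\tau|$ and a frequency cutoff whose location is controlled by $\|u_1\|_{H^2}$. Optimizing the cutoff in $t$ produces, for the endpoint $s=2$,
\[
E(u,t)^{1/2} \leq C\, e^{-t^{1/2}/C}\,\|u_1\|_{H^2}.
\]

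Finally, I would interpolate this against the trivial estimate $E(u,t)^{1/2} \leq C\|u_1\|_{L^2}$, which holds since $\cA^{-1} \colon L^2 \to \cH$ is bounded and the damped wave semigroup is a contraction on $\cH$. Complex interpolation applied to the operator $u_1 \mapsto e^{t\cA}(0,u_1)$ from $H^s = [L^2, H^2]_{s/2}$ into $\cH$ yields, via log-convexity of norms,
\[
E(u,t) \leq C\, e^{-s t^{1/2}/C}\,\|u_1\|_{H^s}^2, \qquad s \in (0,2],
\]
the factor $s$ in the exponent arising as the interpolation weight (the $s/2$ appearing after interpolation is doubled by squaring to obtain $E$). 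The main technical obstacle will be the second step: the contour deformation in a logarithmic strip must be carried out carefully, tracking polynomial losses at the frequency truncation scale and the behavior of the resolvent at bounded frequencies. This step is, however, a standard argument in essentially this exact setting and follows the template of \cite{Chr-wave-2}.
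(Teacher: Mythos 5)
Your proposal is correct and follows essentially the same route as the paper: rescale the semiclassical bound of Corollary \ref{C:bb-1} with $\alpha(h)=|\log h|$ to a stationary resolvent estimate $\|(\Delta_g+\i\tau a-\tau^2)^{-1}\|\leq C\log\langle\tau\rangle/\langle\tau\rangle$ in a logarithmic strip, pass to the first-order generator, apply the resolvent-to-decay machinery of \cite{Chr-wave-2} (which the paper packages as Theorem \ref{T:res-to-decay}, with $k=2$, $P(r)=\log^{-1}(r)$, $F(t)=\e^{t^{1/2}/C}$) to get the endpoint $s=2$, and interpolate with the trivial energy bound for $s\in(0,2)$. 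The only cosmetic difference is that the paper also records the intermediate $H^{s}\to H^{s+j}$ interpolated resolvent bounds explicitly before assembling the estimate on $(\lambda-\cB)^{-1}$.
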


A simple situation in which the hypotheses of
Theorem~\ref{T:bbox1} are satisfied is that of a connected compact
manifold of the form $X=X_0\cup X_1$ with $X_1$ open and $X_0$ isometric
to a warped product $\RR_u\times S^{n-1}_\theta$ with metric
$$
g=du^2+ \cosh^2 u\, d\theta^2.
$$
We take $a\in \CI(X)$ to be identically $1$ on $X_1$ as well as equal
to $1$ for $\abs{x}>1$ in $X_0.$  This class of manifolds thus
includes the ``peanut of rotation'' shown in
Figure~\ref{figure:peanut} as well as its
higher dimensional generalizations.
\begin{figure}
\hfill
\centerline{\input{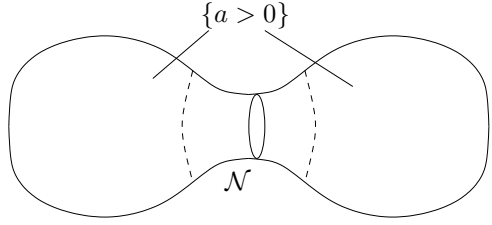}}
\caption{\label{figure:peanut} The ``peanut of rotation''. }
\hfill
\end{figure}

Then the trapped set is easily seen to be $\cN=\{u=0, \xi=0\}$ where
$\xi$ is the cotangant variable dual to $u,$ and the function $x=2-u^2$
satisfies the convexity hypotheses.  The flow on $\cN$ is normally
hyperbolic, with the stable and unstable manifolds being the two
components of the set
$$
\big(\xi^2+\frac{\abs{\eta}_{S^{n-1}}^2}{\cosh^2 u}\big)=\abs{\eta}_{S^{n-1}}^2
$$
i.e., by the intersection of the condition that energy and angular
momentum match their values on $\cN.$
This is an example of a
normally hyperbolic trapped set, and hence both parts of
Theorem~\ref{T:normhyp} apply.

\subsection{A trapped set with degenerate hyperbolicity}

In this section, we study a variant of the normally hyperbolic case,
in which the intersection of stable and unstable manifolds is no
longer transverse, hence the results of \cite{WuZw} no longer apply.
This is the case of a surface of rotation with a \emph{degenerate
  hyperbolic} closed orbit.

Our example manifold is a topological torus
$X=[-1,1]_x\times \SS^1_\theta$, 
equipped with the metric 
\begin{equation}\label{E:metrictorus} 
ds^2 = d x^2 + A^2(x) d \theta^2
\end{equation}
where near $x=0$,  
$$A(x) = (1+|x|^{2m} )^{\frac{1}{2m}}$$
and $m$ is an integer $\geq 1$.   This manifold has a ``fatter'' part and a
``thinner'' part.  At the thickest, there is an elliptic geodesic, and
at the thinnest part, where $x=0$, there is an unstable geodesic, which we denote
by $\gamma$.
If $m\geq  2$,  the Gaussian curvature is chosen to vanish to a finite order at the unstable
geodesic, hence the geodesic is {\it degenerately hyperbolic}.  If
the Gaussian curvature is strictly negative ($m=1$) in a neighbourhood of the
thinnest part, the geodesic is non-degenerate; the geometry of a
single closed hyperbolic geodesic has been extensively studied
in \cite{CdVP-I, CdVP-II, BuZw04, Chr-NC, Chr-NC-erratum, Chr-QMNC, BuCh-dw} and
others.  As is seen in the previous section, in this non-degenerate hyperbolic case the energy
decays sub-exponentially with derivative loss; in \cite{BuCh-dw},
it is shown that the sub-exponential decay rate is sharp. Based on the sharp polynomial loss in local smoothing and resolvent
estimates in
\cite{ChWu-lsm}, Theorem \ref{T:bbox1}
shows that for the degenerate hyperbolic periodic
geodesic, we have the following estimates.
\begin{thm}\label{thm:resol_degen}
Let $X$ be as above, and suppose $a(x)$ controls $X$ geometrically outside a sufficiently small
neighbourhood $U \supset \gamma$, so that  $\cN=\{\gamma\}$. Then  
\[
\| (h^2\Delta_g + \i h 
a -z)^{-1} u \| \leq C h^{-2m/(m+1)} \| u \|
\]
for $z \in [1-\delta, 1 + \delta] + i[-c_0, c_0] h^{2m/(m+1)}$.
\end{thm}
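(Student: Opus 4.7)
The plan is to apply Theorem~\ref{T:bbox1} as a black box, with the role of the polynomial weight played by $\alpha(h) = h^{-(m-1)/(m+1)}$. The key input is the cutoff resolvent estimate for the degenerate hyperbolic orbit proved in \cite{ChWu-lsm}: if we replace the damping term by a smooth nonnegative complex absorbing potential $W_1$ supported where $a$ is supported (i.e.\ outside $U$), then the model operator $P_1(h)= h^2\Delta_g + \i W_1$ satisfies
\[
\|(P_1(h)-z)^{-1}\|_{L^2\to L^2} \leq C\, h^{-2m/(m+1)}, \qquad z\in [1-\delta,1+\delta],
\]
which is exactly the hypothesis of Theorem~\ref{T:bbox1} with $\alpha(h)/h = h^{-2m/(m+1)}$, i.e.\ $\alpha(h)=h^{-(m-1)/(m+1)}$. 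Moreover this $\alpha(h)$ is manifestly of the polynomial form $\mathcal{O}(h^{-K})$ required for Lemma~\ref{lemma:Datchev-Vasy:trapping2} (and hence for the strong form of Theorem~\ref{T:bbox1}).

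First I would verify that the geometric hypothesis $\overline{\pi(\cN)}\cap\supp a=\emptyset$ holds here. Since $a$ controls $X$ geometrically outside a small neighbourhood $U$ of the degenerate hyperbolic closed geodesic $\gamma$, and $\cN=\{\gamma\}$, we may shrink $U$ slightly so that $\supp a$ avoids $\pi(\gamma)$; the set $O_1$ in Theorem~\ref{T:bbox1} can then be chosen as a small open neighbourhood of $\pi(\gamma)$ contained in $X\setminus\supp a$. Next I would check that a complex absorbing potential $W_1$ with $\supp W_1\subset X\setminus O_1$ and $W_1>0$ wherever $a>0$ yields an $(X,W_1)$ configuration to which the estimate of \cite{ChWu-lsm} applies; the estimate there is insensitive to the precise shape of the absorbing potential once it is nonzero on every non-trapped geodesic's forward orbit, which is guaranteed by our geometric control hypothesis.

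Having verified these hypotheses, Theorem~\ref{T:bbox1} produces
\[
\|(h^2\Delta_g + \i h a - z)^{-1}\|_{L^2\to L^2} \leq C\,\frac{\alpha(h)}{h} = C\,h^{-2m/(m+1)}
\]
on the strip
\[
z\in [1-\delta,1+\delta] + \i[-c_0,c_0]\,\frac{h}{\alpha(h)} = [1-\delta,1+\delta] + \i[-c_0,c_0]\,h^{2m/(m+1)},
\]
which is exactly the claimed conclusion.

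The only step that is not automatic is the identification of the hypothesis of Theorem~\ref{T:bbox1} with the results of \cite{ChWu-lsm}: the latter are typically stated as local smoothing or cutoff resolvent estimates near the degenerate orbit, whereas the former demands a \emph{global} $L^2\to L^2$ resolvent bound for the complex absorption model. The main work in a complete proof would therefore be to combine the microlocal estimate near $\gamma$ from \cite{ChWu-lsm} with standard non-trapping propagation estimates away from $\gamma$ (using Lemma~\ref{lemma:Datchev-Vasy:trapping} and forward/backward propagation through the region where $W_1>0$) to produce the global polynomial resolvent bound for $P_1$. Once that is in place, the rest of the argument is an immediate invocation of Theorem~\ref{T:bbox1}.
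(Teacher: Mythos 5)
Your proposal is correct and follows essentially the same route as the paper: the authors likewise quote the $\cO(h^{-2m/(m+1)})$ resolvent bound for the complex-absorption model from \cite{ChWu-lsm}, identify $\alpha(h)=h^{-(m-1)/(m+1)}$ (which is polynomial, so the strong form of Theorem~\ref{T:bbox1} via Lemma~\ref{lemma:Datchev-Vasy:trapping2} applies), and invoke Theorem~\ref{T:bbox1} to obtain the stated bound on the strip of width $h/\alpha(h)=h^{2m/(m+1)}$. Your closing caveat about upgrading the localized estimate of \cite{ChWu-lsm} to a global bound for $P_1$ is a reasonable point of care, but it is exactly the step the paper also treats as supplied by the cited reference.
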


As in the previous subsection, we deduce from this resolvent
estimate an energy decay estimate for solutions to the damped wave
equation.

\begin{cor}
\label{C:damp2}
Assume the hypotheses of Theorem \ref{thm:resol_degen} hold, and let $u$ be
a solution to \eqref{eq:dwe} with $u_0 = 0$, and $u_1 \in H^s$ for
some $s \in(0,2]$.  Then there exists a constant $C = C_{s}
>0$ such that
\[
E(u,t) \leq C \left( \frac{t^{\frac{m+1}{m-1}
    }}{(\log(2+t))^{\frac{3(m+1)^2}{2(m-1)^2} } } \right)^{-s} \|u_1\|_{H^s}^2.
\]

\end{cor}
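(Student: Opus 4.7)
The plan is to convert the semiclassical resolvent bound of Theorem~\ref{thm:resol_degen}, upgraded via Corollary~\ref{C:bb-1} to a bound on a strip, into a resolvent estimate for the first-order damped wave generator on the imaginary axis, and then invoke a quantified semigroup decay result. Concretely, I would recast \eqref{eq:dwe} as $\d_t U = \cA U$ on the energy space $\cH = H^1(X)\times L^2(X)$, with generator $\cA = \bigl(\begin{smallmatrix}0 & I \\ -\Delta & -a\end{smallmatrix}\bigr)$. The standard algebraic reduction expresses $(\i\tau-\cA)^{-1}$ on $\cH$ in terms of the scalar ``stationary'' resolvent $(-\tau^2+\Delta+\i\tau a)^{-1}$ on $L^2$, and the semiclassical rescaling $h=1/|\tau|$ identifies $\tau^2(-\tau^2+\Delta+\i\tau a)^{-1}$ with the inverse of $\tP(h,z)$ from Corollary~\ref{C:bb-1} at $z=1$. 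With $\alpha(h)=h^{-(m-1)/(m+1)}$ read off from Theorem~\ref{thm:resol_degen}, Corollary~\ref{C:bb-1} therefore yields
\[
\|(\i\tau-\cA)^{-1}\|_{\cH\to\cH} \leq C\,|\tau|^{(m-1)/(m+1)}\qquad (|\tau|\gg 1),
\]
together with a holomorphic extension of $(\lambda-\cA)^{-1}$ into the strip $|\Re\lambda|\leq c|\tau|^{-(m-1)/(m+1)}$ satisfying the same bound; for bounded $|\tau|$ a standard argument using unique continuation gives a uniform resolvent bound.

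Next I would feed this resolvent growth into a quantified semigroup decay theorem of Batty--Duyckaerts type. At the natural endpoint $s=2$, corresponding to $U_0\in D(\cA^2)$, this produces
\[
\|\e^{t\cA} U_0\|_{\cH}\leq C\,\varphi(t)\,\|U_0\|_{D(\cA^2)}
\]
for a polynomial-with-logarithm envelope $\varphi(t)$ determined by the resolvent growth. For $U_0=(0,u_1)$ with $u_1\in H^2$ one has $\|U_0\|_{D(\cA^2)}\sim\|u_1\|_{H^2}$, and squaring yields the stated energy estimate at $s=2$. The full range $s\in(0,2]$ then follows by real interpolation between this endpoint and the trivial uniform bound $E(u,t)\leq C\|u_1\|_{L^2}^2$, valid since the energy is nonincreasing along solutions.

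The main technical obstacle is extracting the precise logarithmic exponent $3(m+1)^2/(2(m-1)^2)$: the raw Batty--Duyckaerts correction for polynomial resolvent growth yields only a modest log factor, so matching the stated exponent requires careful bookkeeping through both the interpolation step and the passage from the semiclassical to the physical-time variable. An alternative better suited to producing the exact constants is to bypass the black-box semigroup theorem and instead deform the contour of the inverse Laplace transform into the strip provided by Corollary~\ref{C:bb-1}, combined with a dyadic frequency decomposition; this approach naturally produces decay of the form $t^{-s(m+1)/(m-1)}$ modulated by powers of $\log t$ and seems the most direct route to the claimed constants.
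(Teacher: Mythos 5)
Your outline follows the same architecture as the paper's proof: rewrite \eqref{eq:dwe} as a first-order system, convert the semiclassical bound of Theorem \ref{thm:resol_degen} (upgraded by Corollary \ref{C:bb-1}, via the rescaling $\tau^2=z/h^2$) into a resolvent bound $\|(\lambda-\cB)^{-1}\|_{H\to H}\lesssim |\lambda|^{(m-1)/(m+1)}$ on a region about the real axis of width $P(|\Re\lambda|)=|\Re\lambda|^{(1-m)/(m+1)}$, feed this into a black-box ``resolvent growth implies decay'' theorem at the endpoint, and interpolate with $E(u,t)\leq E(u,0)$ to cover $s\in(0,2)$. The gap is that you never actually produce the claimed rate, and you say so yourself: the exponents $\tfrac{m+1}{m-1}$ and $\tfrac{3(m+1)^2}{2(m-1)^2}$ are the entire content of the corollary, and ``careful bookkeeping'' plus an unexecuted contour-deformation alternative does not supply them. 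The paper's black box is Theorem \ref{T:res-to-decay} (from \cite{Chr-wave-2}), which is not the axis-only Batty--Duyckaerts theorem: it uses both the growth exponent $N$ \emph{and} the width function $P$ of the region of holomorphic continuation, and yields $\|\e^{\i t\cB}(1-\i\cB)^{-k}\|_{H\to H}\leq CF(t)^{-k/2}$ for any monotone increasing $F$ with $F(t)^{(k+1)/2}\leq\exp\bigl(tP(F(t))\bigr)$. Here $N=(m-1)/(m+1)<1$ allows $k=2$, and the ansatz $F(t)=t^\sigma\log^{-q}(t)$ satisfies $F^{3/2}\leq\exp\bigl(tF^{(1-m)/(m+1)}\bigr)$ for $\sigma=\tfrac{m+1}{m-1}$ and $q=\tfrac{3(m+1)^2}{2(m-1)^2}$; since $F^{-k/2}=F^{-1}$ and $\|(1-\i\cB)^2(0,u_1)\|_H\sim\|u_1\|_{H^2}$, squaring gives the $s=2$ case. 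This short verification is the missing step.

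Two further remarks. First, $0\in\sigma(\cB)$ because of constant functions, so any theorem requiring the full imaginary axis to lie in the resolvent set needs the usual reduction modulo constants; your unique-continuation argument covers bounded nonzero frequencies but not $\lambda=0$ (the paper sidesteps this by taking $u_0=0$). Second, your route is not hopeless: running the Borichev--Tomilov refinement of Batty--Duyckaerts with $M(s)=s^{(m-1)/(m+1)}$ would give $\|\e^{\i t\cB}U_0\|_H\lesssim t^{-(m+1)/(m-1)}\|(1-\i\cB)U_0\|_H$, which after squaring is already stronger than the stated bound at $s=1$ and so would imply the corollary; but that is a different theorem from the one the paper invokes, it uses only the axis bound and not the strip you extracted, and you have carried out neither computation.
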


\subsection{Hyperbolic trapped set with small topological pressure}

In this section, we assume that the trapped set $\cN$  has a hyperbolic structure, and that the topological pressure of half
the unstable Jacobian on the trapped set is negative. Roughly, this means that the set $\cN$ is rather thin, 
or filamentary: in dimension 2,
this is for instance equivalent to require that $\cN$ has Hausdorff dimension
$<2$. The simplest case to have in mind is a single, closed
hyperbolic orbit. We then build on \cite{NoZw09_1} to get
resolvent estimates near the trapped set, which we extend to the
global manifold with our different methods. 

We briefly recall here the above dynamical notions. By definition, the hyperbolicity of $\cN\subset S^*X$ means that  for any $\rho\in\cN$, the tangent space $T_{\rho}\cN$ splits
into \emph{flow}, \emph{stable} and \emph{unstable} subspaces\[
T_{\rho}\cN=\RR \sH_{p}\oplus E^{s}_{\rho}\oplus E^{u}_{\rho}\,.\]
If $X$ is of dimension $d$, the spaces $E^{s}_{\rho}$ and $E^{u}_{\rho}$ are $d-1$ dimensional,
and are preserved under the flow map: \[
\forall t\in\RR,\ \ d\e^{t\sH_p} (E^{s}_{\rho})=E^{s}_{\e^{t\sH_p}(\rho)},\quad d \e^{t\sH_p}(E^{u}_\rho)=E^{u}_{\e^{t\sH_p}(\rho)}.\]
Moreover, there exist $C,\lambda>0$ such that \begin{eqnarray}
i) &  & \|d\e^{t\sH_p}(v)\|\leq C\e^{-\lambda t}\|v\|,\ \textrm{\ for\ all\ }v\in E^{s}_{\rho},\ t\geq0\nonumber \\
ii) &  & \|d\e^{-t\sH_p}(v)\|\leq C\e^{-\lambda t}\|v\|,\ \textrm{\ for\ all\ }v\in E^{u}_{\rho},\ t\geq0.\label{eq:Unstable}\end{eqnarray}
One can show that there exist a metric on $T^{*}X$ call the \emph{adapted
metric}, for which one can take $C=1$ in the preceding equations.

The above properties allow us to define the unstable
Jacobian. The adapted metric on $T^{*}X$ induces a volume form
$\Omega_{\rho}$ on any $d$ dimensional subspace of $T(T_{\rho}^{*}X)$.
Using $\Omega_{\rho}$, we can define the unstable Jacobian at $\rho$
for time $t$. Let us define the weak-stable and weak-unstable subspaces
at $\rho$ by 
\[
E^{s,0}_{\rho}=E^{s}_{\rho}\oplus\RR \sH_{p}\,,\quad E^{u,0}_\rho=E^{u}_{\rho}\oplus\RR \sH_{p}.\]
We set \[
J_{t}^{u}(\rho)=\det d\e^{-t\sH_p}|_{E^{u,0}_{\e^{t\sH_p}(\rho)}}=\frac{\Omega_{\rho}(d\e^{-t\sH_p}v_{1}\wedge\dots\wedge d\e^{-t\sH_p}v_{d})}{\Omega_{\e^{t\sH_p}(\rho)}(v_{1}\wedge\dots\wedge v_{d})}\,,\ \ \ \ J^{u}(\rho):= J_{1}^{u}(\rho),\]
where $(v_{1},\dots,v_{d})$ can be any basis of $E^{u,0}_{\e^{t\sH_p}(\rho)}$.
While we do not necessarily have $J^{u}(\rho)<1$, it is true that
$J_{t}^{u}(\rho)$ decays exponentially as $t\to+\infty$. 

We denote by $\Pr_\cN$ the topological pressure functional on the
closed, invariant set $\cN$. We briefly recall a definition, see
\cite{Wal75}, \cite{NoZw09_1} for more material. If $f$ is a
continuous function on $\cN$, $n$ an integer and $\epsilon>0$,
define
$$
Z_{n,\epsilon}(f)=\sup_{\cS} \left\{ \sum_{\rho\in\cS} \exp {\sum_{k=0}^{n-1} f\circ \e^{k\sH_p}(\rho)} \right\}
$$
where the supremum is taken over all the  $(\epsilon,n)$ separated subsets $\cS$. The topological pressure of $f$ on $\cN$ is then 
$$
\Pr_\cN(f):=\lim_{\epsilon\to0}\limsup_{n\to\infty} \frac{1}{n} \log Z_{n,\epsilon}(f)\,.
$$
Our main assumption here is that the topological pressure of $\frac{1}{2}\log J^u$
on $\cN$ is negative, namely:
$$
\Pr_\cN(\frac{1}{2}\log J^u )<0\,.
$$ 
For some $\delta >0$ small enough, this imply the following resolvent estimate: 
\begin{equation}
\forall z\in [1-\delta, 1+\delta],\quad \|(h^2\Delta_g
+\i a-z)^{-1}\|\leq C \frac{\abs{\log h}}{h}
\end{equation}

This estimate is already contained in
\cite{NoZw09_1}, modulo two minor simplifications in our case: the manifold
is compact, and infinity is replaced with the absorbing potential
$\i a$, which control everything outside the trapped set --
\cite{Datchev-Vasy:Gluing} shows explicitly that the estimate, with the more complicated
geometry at infinity, of \cite{NoZw09_1} implies the slightly simpler
complex absorption result.
Using Theorem
\ref{T:bbox1}, 
we immediately deduce the following result: 
\begin{thm}\label{T:pressure}
Let $X$ be a compact manifold, and suppose that  $a\geq 0$ controls $X$ except on $\cN$,
which is assumed to be hyperbolic with the property that 
$$
\Pr _\cN(\frac{1}{2}\log J^u )<0\,.
$$
For $\delta>0$ small enough, there is $h_0$ and $c_0>0$ such that
 for $h\leq h_0$ and $z\in [1-\delta, 1+\delta]+\i [-c_0, c_0]
 \frac{h}{|\log h|}$ we have 
$$
\|(h^2 \Delta_g +\i h 
a-z)^{-1} \|\leq C \frac{\abs{\log h}}{h}\,.
$$
\end{thm}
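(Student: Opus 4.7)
The plan is to derive Theorem~\ref{T:pressure} as a direct application of the black-box result Theorem~\ref{T:bbox1}, with the model input being the complex-absorbing-potential estimate of Nonnenmacher--Zworski under the negative-pressure condition.

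\medskip

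\noindent\textbf{Step 1: Obtain the real-axis resolvent bound for the CAP model.} As noted in the text immediately preceding the theorem, under the hyperbolicity of $\cN$ together with $\Pr_\cN(\tfrac{1}{2}\log J^u)<0$, the main resolvent estimate of \cite{NoZw09_1} yields
\[
\|(h^2\Delta_g+\i a-z)^{-1}\|_{L^2\to L^2}\leq C\frac{|\log h|}{h}, \qquad z\in[1-\delta,1+\delta],
\]
for $\delta>0$ small enough and $h$ small enough. The transfer from the noncompact geometry in \cite{NoZw09_1} to the compact manifold with complex absorbing potential is provided by the gluing result of \cite{Datchev-Vasy:Gluing}, so no further dynamical work is needed.

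\medskip

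\noindent\textbf{Step 2: Verify the hypotheses of Theorem~\ref{T:bbox1}.} Set $\alpha(h):=C\,|\log h|$. This function is bounded below by $1$ for $h$ small and satisfies $\alpha(h)=\cO(h^{-K})$ for any $K>0$ (indeed for any $K>0$ it is $\cO(h^{-K})$ trivially), so it meets the polynomial-growth condition in the hypothesis of Theorem~\ref{T:bbox1}. We also use the standing assumption of the theorem that $a\geq 0$ is smooth, nonnegative, and controls $X$ off $\cN$, which implies $\overline{\pi(\cN)}\cap \supp a=\emptyset$ once $\cN$ is chosen inside a small neighborhood away from $\{a>0\}$.

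\medskip

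\noindent\textbf{Step 3: Apply Theorem~\ref{T:bbox1}.} With the input bound from Step~1 and $\alpha(h)=C|\log h|$, Theorem~\ref{T:bbox1} immediately yields constants $C',c_0>0$ such that
\[
\|(h^2\Delta_g+\i h a-z)^{-1}\|_{L^2\to L^2}\leq C'\frac{|\log h|}{h}
\]
for all $z\in [1-\delta,1+\delta]+\i[-c_0,c_0]\,h/|\log h|$ and all $h\leq h_0$, which is exactly the conclusion of the theorem.

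\medskip

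\noindent\textbf{Main obstacle.} Given that Theorem~\ref{T:bbox1} has already been established, there is no real obstacle: the proof is a one-line consequence. The substantive work lies upstream, in Nonnenmacher--Zworski's construction of the escape function and pressure-based summation argument (which gives the logarithmic loss $|\log h|$), and in the Datchev--Vasy reduction from infinity to the complex absorbing potential. The only small point requiring a moment's attention is checking that $\alpha(h)=|\log h|$ falls within the polynomially-bounded regime assumed in Theorem~\ref{T:bbox1}, so that Lemma~\ref{lemma:Datchev-Vasy:trapping2} applies and Remark~\ref{R:non polynomial} is \emph{not} invoked; otherwise one would lose a second factor of $|\log h|$ in both the resolvent norm and the width of the strip.
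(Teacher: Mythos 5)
Your proposal is correct and follows exactly the paper's own route: the real-axis bound $\alpha(h)/h$ with $\alpha(h)=C|\log h|$ for the complex-absorption model is imported from \cite{NoZw09_1} via the gluing result of \cite{Datchev-Vasy:Gluing}, and Theorem~\ref{T:bbox1} is then applied directly. Your added check that $|\log h|$ is polynomially bounded (so that Lemma~\ref{lemma:Datchev-Vasy:trapping2} applies and Remark~\ref{R:non polynomial} is avoided) is a correct and worthwhile observation that the paper leaves implicit.
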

In particular, there is no spectrum near the real axis in a region of
size $h/\abs{\log h}$. As the resolvent estimate is the same order as that in Theorem
\ref{T:normhyp}, we deduce the same energy decay estimates as in
Corollary \ref{C:damp1}.

\section{From resolvent estimates to the damped wave equation and energy decay}\label{s:dwe}

In this section, we show how to move from a high energy resolvent
estimate to an energy decay estimate for the damped wave equation,
proving Corollaries \ref{C:damp1} and \ref{C:damp2}.
To estimate the energy decay for the damped wave equation, as usual we
rewrite it as a first-order evolution problem : if  $\bs u=(u,\d_{t}u)$
one can write  \eqref{eq:dwe} as 
\begin{equation}\label{eq:DWE2x2}
\d_{t}\bs u=\i\cB\bs u,\quad\cB=\left(\begin{array}{cc}
0 & -\i \Id\\
\i \Delta_{g} & \i  a\end{array}\right)\,.
\end{equation}
The evolution group $\e^{\i t\cB}$ maps initial data $(u_0,u_1)\in
H:= H^1(X)\times H^0(X)$  to a solution $(u, \d_t u)$ of
\eqref{eq:DWE2x2} where $u$ solves \eqref{eq:dwe}. For $s>0$, define 
$$
\|u\|_s:=\|u_0\|_{H^{1+s}(X)}+\|u_1\|_{H^s(X)}
$$
It is not hard to see that  if we can prove 
\begin{equation}\label{eq:DecayEstimate}
\left\|  \e^{\i t\cB}(1-\i \cB)^{-s}   \right\|_{L^2(X)\to
  L^2(X)}^2\leq f(t)
\end{equation}
then we can deduce a decay rate for the energy : 
$$
E(u,t)\leq f(t) \|u\|_s^2
$$
It turns out that we can obtain bounds such as
\eqref{eq:DecayEstimate} if we have estimates on the high-frequency
resolvent $(\lambda-\cB)^{-1}$, $|\lambda| \to\infty$.  

To see this, we recall the following setup from \cite{Chr-wave-2}.  Now suppose 
$ (\lambda - \cB)^{-1}$ continues holomorphically to a
neighbourhood of the
region
\be
\Omega = \left\{ \lambda \in \mathbb{C} : | \Im \lambda | \leq
  \left\{ \begin{array}{ll} C_1, & | \Re \lambda | \leq C_2 \\
      P(| \Re \lambda | ), & | \Re \lambda | \geq C_2, \end{array}
  \right. \right \},
\ee
where $P( | \Re \lambda |) >0$ and is monotone decreasing (or constant) as $| \Re \lambda |
\to \infty$, $P(C_2) = C_1$, and assume for simplicity that $\partial \Omega$ is smooth.  Assume
\ben
\label{res-est-101}
\|  (\lambda - \cB)^{-1}  \|_{H \to H} \leq G( | \Re \lambda |)
\een
for $\lambda \in \Omega$, where $G(| \Re \lambda| ) = \mathcal{O}( | \Re \lambda |^N)$ for
some $N \geq 0$.  

\begin{thm}
\label{T:res-to-decay}
Suppose $\cB$ satisfies all the assumptions above, and let $k \in \NN$,
$k > N+1$.  Then for any $F(t)>0$, monotone increasing, 
satisfying
\ben
\label{F-cond}
F(t)^{(k+1)/2} \leq \exp (t P(F(t))),
\een
there is a constant $C>0$ such that
\ben
\label{abs-decay-est}
\left\| \frac{ \e^{\i t \cB}}{(1 -\i\cB)^k} 
\right\|_{H \to H} \leq C F(t)^{-k/2}.
\een
\end{thm}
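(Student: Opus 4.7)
The plan is to represent the operator $\e^{\i t\cB}(1-\i\cB)^{-k}$ via the Dunford--Riesz holomorphic functional calculus and then deform the resulting contour into the region $\Omega$ where the resolvent bound \eqref{res-est-101} is available. Because the damped wave generator $\cB$ is dissipative with spectrum contained in a bounded strip in the closed upper half-plane, I would begin by taking a positively oriented contour $\Gamma$ encircling the spectrum (e.g.\ a horizontal line just below $\RR$ closed by a large semicircle in the upper half-plane) and writing
\[
\frac{\e^{\i t\cB}}{(1-\i\cB)^k} = \frac{1}{2\pi\i}\int_\Gamma \frac{\e^{\i t\lambda}}{(1-\i\lambda)^k}(\lambda-\cB)^{-1}\,d\lambda.
\]
The pole of $(1-\i\lambda)^{-k}$ at $\lambda=-\i$ sits in the lower half-plane and is not enclosed, and the contribution of the semicircular arc vanishes in the limit thanks to the exponential decay of $\e^{\i t\lambda}$ as $\Im\lambda\to+\infty$ combined with the algebraic factor $(1-\i\lambda)^{-k}$ and polynomial resolvent growth.

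Next I would deform $\Gamma$ upward across the real axis onto the upper boundary $\partial\Omega\cap\{\Im\lambda\geq 0\}$, an operation permitted since $\Omega$ lies in the resolvent set of $\cB$ by hypothesis. On the deformed contour one has $\Im\lambda = P(|\Re\lambda|)$ for $|\Re\lambda|\geq C_2$ and $\Im\lambda=C_1$ otherwise, so $|\e^{\i t\lambda}|=\e^{-tP(|\Re\lambda|)}$, which is small precisely where $P$ is large. Applying \eqref{res-est-101}, parameterizing by $s=\Re\lambda$, and noting $|1-\i\lambda|\sim 1+|s|$ on the contour, I would arrive at an estimate of the form
\[
\left\|\frac{\e^{\i t\cB}}{(1-\i\cB)^k}\right\|_{H\to H}\lesssim \int_{\RR} \e^{-tP(\max(|s|,C_2))}\,(1+|s|)^{N-k}\,ds.
\]

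To finish, I would split the integral at $|s|=F(t)$. On $|s|\leq F(t)$, monotonicity of $P$ gives $\e^{-tP(|s|)}\leq \e^{-tP(F(t))}$ while $(1+|s|)^{N-k}$ is integrable thanks to $k>N+1$; the hypothesis $F(t)^{(k+1)/2}\leq \e^{tP(F(t))}$ then controls this portion by an appropriate negative power of $F(t)$ matching $F(t)^{-k/2}$. On $|s|>F(t)$ the algebraic factor $(1+|s|)^{N-k}$ alone furnishes the tail bound. The principal technical obstacle lies in rigorously justifying the contour deformation, in particular the closure of $\Gamma$ at infinity: one must check that the polynomial resolvent growth inside $\Omega$ is dominated by the algebraic factor $(1-\i\lambda)^{-k}$ so that all the integrals involved converge absolutely. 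This is precisely what the hypothesis $k>N+1$ secures, and the constraint linking $F$ and $P$ is exactly what is needed to balance the low-frequency contribution (coming from $\e^{-tP(F(t))}$) against the high-frequency tail (coming from $(1-\i\lambda)^{-k}$).
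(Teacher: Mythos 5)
The paper states this theorem without proof, quoting it from \cite{Chr-wave-2}, so the comparison is with the argument of that reference and with what the stated hypotheses can actually deliver. Your overall framework (resolvent representation of the propagator, deformation of the contour to the upper boundary of $\Omega$, use of \eqref{res-est-101} there, and a low/high frequency splitting) is the right one, but the final optimization has a genuine gap. After deforming, your bound is
\[
\int_{\RR}\e^{-tP(\max(|s|,C_2))}(1+|s|)^{N-k}\,ds,
\]
and you claim the tail $|s|>F(t)$ is controlled by the algebraic factor alone. That tail is of size $F(t)^{N+1-k}$, which is $\leq F(t)^{-k/2}$ only when $k\geq 2(N+1)$; the hypothesis is merely $k>N+1$. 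In the paper's own application (Corollary \ref{C:damp2}, where $k=2$ and $N=(m-1)/(m+1)$), one has $N+1<k<2(N+1)$ for every $m>1$, so your tail estimate yields only $F(t)^{-2/(m+1)}$ instead of $F(t)^{-1}$. Re-optimizing the splitting point does not help, because pushing it out to where the algebraic tail is small enough forces you to evaluate $P$ at a larger argument than $F(t)$, which \eqref{F-cond} does not control.

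The missing ingredient is that the high frequencies should not be handled through the contour integral at all. One splits the data at frequency $\Lambda=F(t)^{1/2}$ using the functional calculus of the self-adjoint $\Delta_g$: on frequencies above $\Lambda$, the uniform boundedness of the damped wave group $\e^{\i t\cB}$ on $H$ (energy non-increase) combined with the $k$-fold smoothing of $(1-\i\cB)^{-k}$ gives a bound $C\Lambda^{-k}=CF(t)^{-k/2}$ with no time decay needed; the contour deformation is then applied only to the low-frequency piece, where the integral effectively localizes to $|\Re\lambda|\lesssim\Lambda$ and is bounded by $C\e^{-tP(\Lambda)}$ plus harmless elliptic tails. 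This is exactly why the exponents $k/2$ and $(k+1)/2$ (rather than $k$ and $k+1$) appear in the statement and in \eqref{F-cond}: they are calibrated to the square-root splitting $\Lambda=F(t)^{1/2}$ together with $P(F(t))\leq P(F(t)^{1/2})$. A secondary point: your initial contour should be a single horizontal line $\Im\lambda=-\ep$ (the inverse Laplace transform of the semigroup, absolutely convergent since $k\geq 2$), not a closed Dunford contour; the spectrum of $\cB$ fills an unbounded horizontal strip, so a large semicircle in the upper half-plane would pass arbitrarily close to eigenvalues where no resolvent bound is available, and the deformation should instead be justified on large rectangles whose vertical sides are killed by the $(1+|\lambda|)^{N-k}$ decay.
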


In all cases considered in this paper, we have semiclassical resolvent
estimates
\[
\| (h^2 \Delta_g +\i \sqrt{z}h a -z)^{-1} \|_{L^2 \to L^2} \leq
\frac{\alpha(h)}{h},\quad z \sim 1 + \i h/\alpha(h),
\]
If we rescale
\[
\tau^2 = \frac{z}{h^2},
\]
then our resolvent estimates become
\[
\| (\Delta_g + \i \tau a - \tau^2 )^{-1} \|_{L^2 \to L^2} \leq
\frac{\alpha(|\tau|^{-1})}{|\tau|}.
\]
for $\Im \tau \sim (\alpha( |\Re \tau|^{-1} ))^{-1}$.  By interpolation,
this implies for $0 \leq j \leq 2$, 
\[
\| (\Delta_g + \i \tau a - \tau^2 )^{-1} \|_{H^{s} \to H^{s+j}} \leq
| \tau|^{j-1} {\alpha(|\tau|^{-1})}.
\]
Hence, with $\cB$ as above and $H = H^1 \times H^0$, a simple
calculation yields 
\[
\| (\lambda - \cB )^{-1} \|_{H \to H} \leq \alpha ( | \lambda |^{-1} ).
\]

For Corollary \ref{C:damp1}, we take 
$\alpha( |
\lambda|^{-1} ) = \log ( 2 + | \lambda |)$.  Then $k = 2$ suffices,
$P(r) =
\log^{-1}(r)$, and
hence we may take
\[
F(t)= 
\e^{t^{1/2}/C}.
\]
This recovers the endpoint estimate $s=2$.  To get the intermediate
estimates for $s \in (0,2)$ we interpolate with the trivial estimate
\[
E(u,t) \leq E(u, 0).
\]

For Corollary \ref{C:damp2}, 
we have $\alpha( | \lambda |^{-1}) = | \lambda
|^{(m-1)/(m+1)}$, $N = (m-1)/(m+1) <1$, so that $k = 2$, and $P(r) =
r^{(1-m)/(m+1)}$.  We try 
\[
F(t) = \frac{t^s}{\log^q(t)},
\]
and insist 
\[
t^{3s/2} \log^{-3q/2} (t) \leq \exp(t t^{s(1-m)/(m+1)}
\log^{q(m-1)/(m+1)}(t) ),
\]
which is satisfied if
$$
s = \frac{m+1}{(m-1)}
$$
 and 
$$
q = \frac{3(m+1)^2}{2(m-1)^2}.
$$ 
Again interpolating with the trivial estimate proves the Corollary.

\end{document}